\numberwithin{equation}{section}
\newcommand{\bsa}{{\boldsymbol{a}}}
\newcommand{\bsx}{{\boldsymbol{x}}}
\newcommand{\bsy}{{\boldsymbol{y}}}
\newcommand{\bsz}{{\boldsymbol{z}}}
\newcommand{\bsX}{{\boldsymbol{X}}}
\newcommand{\bbR}{\mathbb{R}}
\newcommand{\Cqmc}{C^{\textsc{qmc}}}
\newcommand{\Cube}{\mathcal{Q}}
\newcommand{\rd}{\mathrm{d}}
\newcommand{\IP}[2]{#1 \cdot #2}
\newcommand{\LBo}[1]{\Delta_{#1}^*} 
\newcommand{\filter}[1]{h(#1)} 
\newcommand{\ffilter}[1]{h\Big(#1\Big)} 
\DeclareMathOperator{\supp}{supp}
\newcommand{\floor}[1]{\left\lfloor{#1}\right\rfloor}
\DeclareMathOperator{\wce}{wce}
\DeclareMathOperator{\dist}{dist}
\newtheorem{theorem}{Theorem}
\numberwithin{theorem}{section}
\theoremstyle{definition}
\newtheorem{definition}[theorem]{Definition}
\newtheorem*{remark*}{Remark}
\date{\today}
\begin{document}

\title{Needlets Liberated}

\author[J. S. Brauchart]{Johann S. Brauchart\textsuperscript{\textasteriskcentered}\textsuperscript{\dag}}
\author[P. J. Grabner]{Peter J. Grabner\textsuperscript{\ddag}}
\author[I. H. Sloan]{Ian H. Sloan}
\author[R. S. Womersley]{Robert S. Womersley}

\address[J. S. B., P. J. G.]{Institute of Analysis and Number Theory, Graz University of Technology, 8010 Graz, Austria}
\address[I. H. S., R. S. W.]{School of Mathematics and Statistics, UNSW Australia, Sydney, NSW, 2052, Australia}

\thanks{\noindent \textsuperscript{\textasteriskcentered}Corresponding author. \textsuperscript{\dag}The research of this author was supported by the Austrian Science Fund FWF Lise Meitner Scholarship project M2030 ``Self organization by local interaction''.}

\thanks{\textsuperscript{\ddag}This author was supported by the Austrian Science Fund FWF project F5503 (part of the Special Research Program (SFB) ``Quasi-Monte Carlo Methods: Theory and Applications'').}

\thanks{This research was supported under the Australian Research Council's \emph{Discovery Project} DP180100506}.

\thanks{The first three authors also acknowledge the support of the Erwin Schr{\"o}dinger Institute in Vienna, where part of the work was carried out during the programme ``Tractability of High Dimensional Problems and Discrepancy'' and all authors also acknowledge the support of the National Science Foundation under Grant No. DMS-1439786 while they were in residence at the Institute for Computational and Experimental Research in Mathematics (ICERM) in Providence, RI, during the Spring 2018 Semester Program on "Point Configurations in Geometry, Physics and Computer Science".}

\email{j.brauchart@tugraz.at}
\email{peter.grabner@tugraz.at}
\email{i.sloan@unsw.edu.au}
\email{r.womersley@unsw.edu.au}

\begin{abstract}
  Spherical needlets were introduced by Narcowich, Petrushev, and Ward to provide a multiresolution sequence of polynomial approximations to functions on the sphere. The needlet construction makes use of 
  integration rules that are exact for polynomials up to a given degree. The aim of the present paper is to relax the exactness 
  of the integration rules by replacing them with QMC designs as introduced by Brauchart, Saff, Sloan, and Womersley (2014).
  Such integration rules
  (generalised here by allowing non-equal cubature weights) provide the same asymptotic order of convergence as exact rules for Sobolev
  spaces $\mathbb{H}^s$, but are easier to obtain numerically. With such rules we construct ``generalised needlets''. The paper
  provides an error analysis that allows the replacement of the original needlets by generalised needlets, and more generally, analyses a hybrid scheme in which the needlets for the lower levels are of the traditional kind, whereas the new generalised needlets are used for some number of higher levels.  Numerical experiments complete the paper.
\end{abstract}

\keywords{Generalized Needlets, Sphere, QMC Designs, Cubature, Filtered Hyperinterpolation, Convergence Order}
\subjclass[2020]{Primary 42C10; Secondary 41A25, 65D32}

\maketitle

\section{Introduction}
\label{sec:intro}

Spherical needlets, introduced by F. J. Narcowich, P. Petrushev, and
J. D. Ward~\cite{NaPeWa2006-1,NaPeWa2006-2} provide an elegant multiresolution
sequence of polynomial approximations on the sphere. The level $J$
approximation of a square-integrable function $f$ has the form
\begin{equation}\label{eq:VJ}
V_J^{{\rm need}}(f;\bsx) := \sum_{j=0}^J\sum_{k=1}^{n_j}\left(f,\psi_{j,k}\right)\psi_{j,k}(\bsx),
\qquad \bsx\in \mathbb{S}^d,
\end{equation}
where the inner product is the $\mathbb{L}_2$ inner product
\begin{equation}\label{eq:inner}
(f,g) := \int_{\mathbb{S}^d} f(\bsx) \, g(\bsx) \, \rd \sigma(\bsx),
\end{equation}
$\mathbb{S}^d$ is the unit sphere in $\bbR^{d+1}$, and $\sigma$ is the
normalised surface area measure on $\mathbb{S}^d$.
The needlet $\psi_{j,k}$ is a spherical polynomial of degree $2^j-1$,
which is band limited, in that it is orthogonal to all spherical
polynomials of degree at most $2^{j-2}$. The second label $k$ in
$\psi_{j,k}$ indicates its location on the sphere, in the sense that each
collection $\{\psi_{j,k}: k = 1,\ldots, n_j\}$ of needlets for level $j$ for
$j=0, 1, 2, \ldots$ is associated with a cubature rule on the sphere (the
``needlet cubature rule'') with $n_j$ points $\bsx_{j,k}, k = 1, \ldots, n_j$
and positive weights $w_{j,k}, k = 1,\ldots,n_j$, where the cubature rule is
required to be exact for all polynomials up to degree $2^{j+1}-1$; i.e.,
\begin{equation}\label{eq:polyprec}
\sum_{k=1}^{n_j}w_{j,k} \, p(\bsx_{j,k}) = \int_{\mathbb{S}^d}  p(\bsx) \, \rd
\sigma(\bsx), \quad p \in \mathbb{P}_{2^{j+1}-1}, \quad j=0, 1, 2, \ldots.
\end{equation}
%
Here, $\mathbb{P}_\ell$ is the set of all spherical polynomials on
$\mathbb{S}^d$ of degree less than or equal to $\ell$. The needlet
$\psi_{j,k}$ is a zonal function centered at the cubature point $\bsx_{j,k}$
in the sense that $\psi_{j,k}(\bsx)$ depends only on the geodesic distance of $\bsx$ from $\bsx_{j,k}$, or equivalently, on the Euclidean inner product $\IP{\bsx}{\bsx_{j,k}}$.
%
%
Precisely, the needlet $\psi_{j,k}$ is defined by
\begin{equation}\label{eq:need}
\psi_{0,k}(\bsx) := \sqrt{w_{0,k}}, \quad
\psi_{j,k}(\bsx) := \sqrt{w_{j,k}} \sum_{\ell=0}^\infty \ffilter{\frac{\ell}{2^{j-1}}} Z(d,\ell)
P_\ell( \IP{ \bsx_{j,k} }{ \bsx }), \quad j\ge 1, \,\bsx\in \mathbb{S}^d,
\end{equation}
where $h$ (a ``band limited'' filter) is a non-negative $C^\kappa$
function on $\mathbb{R}_+$, $\kappa\in\mathbb{N}$,  with support
$[1/2,2]$ (making the sum finite) and satisfying
\begin{equation}\label{eq:hprop}
  \filter{t}^2 + \filter{2t}^2 = 1, \qquad \text{$t\in[1/2,1]$},
\end{equation}
from which it follows that $h(1)= 1$, while $h(1/2)= h(2) = 0$.
Here, $Z(d, \ell)$ is the dimension of the space of the homogeneous harmonic polynomials
of exact degree $\ell$ on $\mathbb{S}^d$ and $P_\ell$ is the
(generalised) Legendre polynomial associated with $\mathbb{S}^d$ normalised so that $P_\ell(1) = 1$.
Other properties of needlets are given in Section \ref{sec:needletdef}.

Our aim in this paper is to liberate needlets from this exactness
requirement of the integration rule for polynomials of potentially high degree, because of its
very restrictive nature.  Instead, we will define ``generalised needlets''
$\Psi_{j,k}$, in a way that preserves the multiresolution form of the
approximation, in analogy with \eqref{eq:VJ}.  These generalised needlets
too will be associated with a cubature rule (a ``generalised needlet
cubature rule''), but now without the polynomial precision requirement
spelled out in \eqref{eq:polyprec}. Instead, we will require only that the
set of generalised needlet cubature rules yields the
optimal rate of convergence for functions with a certain smoothness, see
\eqref{eq:QMCprop}. To avoid confusion we will use capital letters to
denote the points, weights and number of points in the generalised needlet
cubature rules. Thus the generalised needlet approximation will take the
form, in complete analogy to \eqref{eq:VJ},
\begin{equation}\label{eq:VJgen}
V_J^{{\rm gen}}(f;\bsx) :=
\sum_{j=0}^J\sum_{k=1}^{N_j}\left(f,\Psi_{j,k}\right)\Psi_{j,k}(\bsx),
\qquad \bsx\in \mathbb{S}^d,
\end{equation}
with
\begin{equation}\label{eq:gen_need}
\Psi_{0,k}(\bsx) := \sqrt{W_{0,k}},  \quad \Psi_{j,k}(\bsx) := \sqrt{W_{j,k}} \sum_{\ell=0}^\infty \ffilter{\frac{\ell}{2^{j-1}}} Z(d,\ell)
P_\ell(\IP{ \bsX_{j,k} }{ \bsx }), \; j\ge 1, \; \bsx\in \mathbb{S}^d,
\end{equation}
so that $\Psi_{j,k}$ is another zonal polynomial of degree $2^j-1$, this one centered at the
cubature point~$\bsX_{j,k}$.

To construct the generalised needlet cubature rules, we assume we are
given an infinite sequence of $N$-point cubature rules on $\mathbb{S}^d$,
with the $N$-point rule having points and weights labelled $X_1^{(N)},
X_2^{(N)},\ldots, X_N^{(N)}$ and  $W_1^{(N)}, W_2^{(N)},\ldots, W_N^{(N)}$
respectively, and such that
\begin {equation}\label{eq:QMCprop}
\sup_{\substack{f\in \mathbb{H}^s(\mathbb{S}^d), \\
\|f\|_{\mathbb{H}^s}\le 1}} \left| \sum_{k=1}^{N} W_{k}^{(N)} \,
f(\bsX_{k}^{(N)}) - \int_{\mathbb{S}^d} f(\bsx) \, \rd \sigma(\bsx)
\right| \le \frac{\Cqmc}{N^{s/d}},
\end{equation}
for some positive constant $\Cqmc$ that may depend on $d$, $s$, $h$ and the
particular sequence of cubature rules, but does not depend on $N$. Here
$\mathbb{H}^s(\mathbb{S}^d)$ is the Sobolev space of smoothness $s\ge 0$,
to be defined in Section \ref{sec:prelim}.  This space may be thought of
as the space of functions in $\mathbb{L}_2(\mathbb{S}^d)$ with $s$
square-integrable derivatives.  The maximal value $s^*$ of $s$ for the
given sequence of cubature rules is called the \emph{strength} of the
particular sequence.

Given the sequence of cubature rules satisfying \eqref{eq:QMCprop}, we
select an increasing subsequence $N_{i_1}, N_{i_2} \ldots, N_{i_J}$ of length $J$.  Without loss of generality we relabel $N_{i_j}$ as $N_j$, and
define the $j$th generalised needlet cubature rule to be the member of the
above sequence with $N_j = N_{i_j}$, and label the points and weights as
$X_{j,k} = X_k^{(N_j)}$ and $W_{j,k} = W_k^{(N_j)}$ respectively.  Thus the
$j$th generalised cubature rule satisfies
\begin{equation}
\label{eq:QMCpropj}
\sup_{\substack{f\in \mathbb{H}^s(\mathbb{S}^d), \\ \|f\|_{\mathbb{H}^s}\le 1}} \left| \sum_{k=1}^{N_j} W_{j,k} \, f(\bsX_{j,k}) -
\int_{\mathbb{S}^d} f(\bsx) \, \rd \sigma(\bsx) \right|
\le \frac{\Cqmc}{N_j^{s/d}},  \qquad  s \in (d/2,s^*).
\end{equation}

The construction of the generalised needlet cubature rules as described
here makes it clear that the number of points $N_j$ for the $j$th such
rule is under the control of the user, with the recognition that higher
numbers of points $N_j$ will be paid for by higher cost.

Sequences of cubature rules with the property~\eqref{eq:QMCprop} certainly
exist, firstly because all positive-weight rules that satisfy
\eqref{eq:polyprec} and for which the number of points $N_j$ satisfies
$N_j\asymp 2^{jd}$ (meaning that there exist $c_1, c_2 >0$ such that $c_1
2^{jd} \le N_j \le c_2 2^{jd}$ for all~$j$) automatically satisfy the
condition \eqref{eq:QMCprop}, see \cite{BrHe2007}. Secondly, such rules
generalise in a natural way so-called ``QMC designs'', introduced in the paper \cite{BrSaSlWo2014}: QMC designs are the special case of our
new rules in which all of the weights $W_{j,k}$ are equal, and equal to
$1/N_j$. That paper proves the existence of some rules that satisfy
\eqref{eq:QMCprop} yet not property~\eqref{eq:polyprec}.
One example is provided by $N$-point sets that maximise the sum of all
mutual Euclidean distances. Such point sets are proved in
\cite{BrSaSlWo2014} to form a sequence of QMC designs
of strength at least $ (d+1)/2$.
(Numerical experiments in \cite{BrSaSlWo2014} would suggest that $s^*=4$ is achieved for $d=2$.)
That paper also demonstrated empirically the likelihood that many other
well known point sets on the sphere are QMC designs for moderate values of~$s^*$. It is a reasonable expectation that many sequences of rules with the property
\eqref{eq:QMCprop} exist for usefully large values of~$s^*$, especially as
we are here allowing much greater freedom in the choice of weights.


Our aim is to show that the generalised needlet approximation has
properties comparable to \eqref{eq:VJ}, if the cubature rules are well
chosen.

In this paper we focus on $\mathbb{L}_2(\mathbb{S}^d)$ error estimates.
For the classical needlet construction it is known that for a
sufficiently smooth kernel $h$ and for $f \in H^s(\mathbb{S}^d)$ the $\mathbb{L}_2$ error satisfies
\begin{equation}\label{eq:need_error}
\|V_J^{{\rm need}}(f; \cdot )-f\|_{\mathbb{L}_2} \le \frac{C'}{2^{J\,s}} \, \|f\|_{\mathbb{H}^s},
\end{equation}
which is the optimal convergence order for a polynomial approximation of the given degree.  The constant $C' > 0$ may depend on $d,s$ and $h$, but does not depend on $J$.  

For the generalised needlet construction of this paper it turns out to be
advantageous for the smaller values of $j$ to use traditional needlet
cubature rules, say for $j=0$ up to $J_0$. We shall refer to this construction as
the ``hybrid'' implementation. The $\mathbb{L}_2$ error
is then bounded, as we show in Section \ref{sec:hybrid}, by
\begin{equation} \label{eq:gen_error}
\|V_J^{{\rm gen}}(f; \cdot )-f\|_{\mathbb{L}_2} \leq \frac{C'}{2^{J\,s}} \, \|f\|_{\mathbb{H}^s} \,
 + \, \frac{C^{''}}{2^{(J_0-1)s}} \left( \sum_{j=J_0+1}^J \frac{2^{j(s+d)}}{N_j^{s/d}} \right)
\|f\|_{\mathbb{H}^s},
\end{equation}
in which the first term is the original needlet error
\eqref{eq:need_error}, and the second term is a correction due to the use
of the generalised needlet cubature rule for $j > J_0$. Obviously, if the generalised
needlet approximation is to improve upon the true needlet approximation at
level $J_0$  then according to \eqref{eq:gen_error}
the sum in parentheses must be suitably small.  This can always be achieved by choosing the $N_j$ for $j > J_0$ to be sufficiently large. 

The underlying reason why traditional cubature is desirable for the
smaller values of~$j$, and hence the justification for the hybrid
implementation, lies in the extraordinary efficiency of traditional
cubature rules for polynomials of low degree: for example,
exact integration of a polynomial of degree $\ell$ on $\mathbb{S}^2$  by a
product rule requires only $\frac{\ell^{2}}{2} + \mathcal{O}(\ell)$ points
(see \cite{HeSlWo2015}).
In contrast, by their very definition QMC design rules are effective
only for larger values of $N$.
On the other hand, the freedom to use a much wider range of cubature
rules gives great flexibility. The greater freedom of choice is
of special value if equal-weight cubature rules are preferred in the
needlet design, because when $N$ is large spherical designs are hard to construct and known rules are in very short supply
(although for $\mathbb{S}^2$ computed efficient symmetric spherical designs with good geometric properties are available~\cite{Womersley2018}
for degrees up to $325$ with $52,978$ points).
On the other hand, for example,
for $\mathbb{S}^2$ the Bauer generalised spiral points~\cite{Bauer2000,RaSaZh1994}
appear to be QMC designs with $s^*=3$, see \cite{BrSaSlWo2014},
yet can be comfortably evaluated for millions of points.
Effective construction of QMC design sequences is a topic of
ongoing research.


The paper is organised as follows.
In Section~\ref{sec:prelim} we collect useful facts regarding spherical
harmonics, our Sobolev space setting, which is considered as a reproducing
kernel Hilbert space, error of cubature rules, and QMC design sequences.
Section~\ref{sec:needletdef} gives details on filters and needlets.
Section \ref{sec:hybrid} describes the hybrid implementation, in
which traditional needlets are used for $j\le J_0$, and derives $\mathbb{L}_2$
error bounds.  The main result is Theorem \ref{thm:main}. The final section contains numerical experiments to test the theory.

\section{Preliminaries}
\label{sec:prelim}

We are concerned with real-valued functions defined on the unit
sphere $\mathbb{S}^d := \{ \bsx \in \bbR^{d+1} \, : \, \IP{ \bsx }{ \bsx }
= 1 \}$ in the Euclidean space $\bbR^{d+1}$, $d \geq 2$, provided with the
inner product \eqref{eq:inner} and induced norm $\| \cdot \|_2:=
(\cdot,\cdot)^{1/2}$. The measure on $\mathbb{S}^d$ is the normalised
Lebesgue surface area measure $\sigma$, whereas~$\omega_d$ denotes the
non-normalised surface area of $\mathbb{S}^d$ and is given by $\omega_0 =
2$ and
\begin{equation} \label{eq:omega.d.ratio}
\frac{\omega_{d-1}}{\omega_d}
 = \frac{\Gamma( (d + 1)/2 )}{\sqrt{\pi} \, \Gamma( d/2 )},
 \qquad \frac{\omega_{d-1}}{\omega_d} \int_{-1}^1 \left( 1 - t^2 \right)^{d/2-1} \rd t = 1;
\end{equation}
here, $\Gamma( z )$ denotes the gamma function.

We shall use the Pochhammer symbol (rising factorial) defined recursively by
\begin{equation*}
( a )_0 := 1, \qquad ( a )_{n} := ( a )_{n-1} ( n - 1 + a ), \quad n \geq 1,
\end{equation*}
and given explicitly by
\begin{equation*}
( a )_n = \frac{\Gamma( n + a )}{\Gamma( a )},
\end{equation*}
whenever the right-hand side is defined.

\subsection{Spherical harmonics}

The restriction to $\mathbb{S}^d$ of a homogeneous and harmonic polynomial of total degree $\ell$ defined on $\bbR^{d+1}$ is called a \emph{spherical harmonic} of degree $\ell$ on~$\mathbb{S}^d$. The family $\mathcal{H}_{\ell}^{d} = \mathcal{H}_{\ell}^{d}( \mathbb{S}^d )$ of all spherical harmonics of exact degree $\ell$ on $\mathbb{S}^d$ has dimension
\begin{equation} \label{eq:Z.d.ell}
Z(d,\ell) := \left( 2\ell + d - 1 \right) \frac{\Gamma( \ell + d - 1 )}{\Gamma( d ) \Gamma( \ell + 1 )} = \frac{2 \ell + d - 1}{d - 1} \, \frac{(d-1)_\ell}{\ell!} \sim \frac{2}{\Gamma(d)} \, \ell^{d-1} \quad \text{as $\ell \to \infty$.}
\end{equation}
Each spherical harmonic $Y_{\ell}$ of exact degree $\ell$ is an eigenfunction of the negative \emph{Laplace-Beltrami operator} $-\LBo{d}$ for $\mathbb{S}^d$, with eigenvalue
\begin{equation} \label{eq:eigenvalue}
\lambda_\ell := \ell \left( \ell + d - 1 \right), \qquad \ell = 0, 1, 2, \ldots.
\end{equation}
%
We shall denote by $\mathbb{P}_\ell = \mathbb{P}_\ell( \mathbb{S}^d )$ the space of all spherical polynomials of degree $\leq \ell$. This space coincides with the span of all spherical harmonics up to (and including) degree $\ell$, and its dimension is 
\begin{equation} \label{eq:dim.formula}
\mathrm{dim}( \mathbb{P}_\ell ) = \sum_{\ell^\prime=0}^\ell Z(d,\ell^\prime) = Z(d+1,\ell).
\end{equation}

As usual, we let $\{ Y_{\ell, m} : m = 1, \ldots, Z(d, \ell) \}$ denote a
real $\mathbb{L}_2$-orthonormal basis of $\mathcal{H}_{\ell}^{d}$. Then
the basis functions $Y_{\ell, k}$ satisfy the identity known as the
\emph{addition theorem}:
\begin{equation} \label{eq:addition.theorem}
\sum_{m=1}^{Z(d,\ell)} Y_{\ell,m}( \bsx ) Y_{\ell,m}( \bsy ) = Z(d,\ell) \, P_\ell( \IP{ \bsx }{ \bsy } ), \qquad \bsx, \bsy \in \mathbb{S}^d.
\end{equation}
Here, $P_\ell$ is the generalised  Legendre polynomial, orthogonal on the interval $[-1,1]$ with respect to the weight function $(1-t^2)^{d/2-1}$, and normalised by $P_\ell(1) = 1$. Notice that for $d \geq 2$ and $\lambda = \frac{d-1}{2}$,
\begin{equation*}
Z(d,\ell) P_\ell(x)
=
\frac{\ell+\lambda}{\lambda} \, C_\ell^\lambda(x)
=
\frac{\ell+\lambda}{\lambda} \, \frac{(2\lambda)_\ell}{( \lambda + 1/2 )_\ell} \, P_\ell^{(\lambda-1/2,\lambda-1/2)}(x),
\end{equation*}
where $C_n^\lambda$ is the $n$-th Gegenbauer polynomial with index $\lambda$ and $P_n^{(\alpha,\beta)}$ is the $n$-th Jacobi polynomial with indices $\alpha$ and $\beta$ (see \cite{NIST:DLMF}).

The collection $\{ Y_{\ell, m} : m = 1, \ldots, Z(d, \ell); \ell = 0, 1, \ldots \}$ forms a complete
orthonormal (with respect to $\sigma$) system for the Hilbert space $\mathbb{L}_2(\mathbb{S}^d)$ of
square-integrable functions on~$\mathbb{S}^d$ endowed with the inner product
\eqref{eq:inner} 
%
(For more details, we refer the reader to \cite{BeBuPa1968, Mu1966}.)

The Funk-Hecke formula states that for every spherical harmonic $Y_\ell$ of degree $\ell$ (see~\cite{Mu1966}),
\begin{equation}
\label{eq:Funk-Hecke.formula}
\int_{\mathbb{S}^d} g( \IP{ \bsy}{ \bsz } ) \, Y_\ell( \bsy ) \, \rd \sigma( \bsy ) = \widehat{g}(\ell) \, Y_\ell( \bsz ), \qquad \bsz \in \mathbb{S}^d,
\end{equation}
where
\begin{equation} \label{eq:Funk-Hecke.formula.B}
\widehat{g}( \ell ) = \frac{\omega_{d-1}}{\omega_d} \, \int_{-1}^1 g( t ) \, P_\ell( t ) \left( 1 - t^2 \right)^{d/2-1} \rd t.
\end{equation}
(This formula holds, in particular, for the spherical harmonic ${Y_\ell(
\bsy ):= P_\ell( \IP{ \bsa}{ \bsy } )}$, $\bsa \in \mathbb{S}^d$.)

\subsection{Sobolev spaces}

The Sobolev space $\mathbb{H}^s(\mathbb{S}^d)$ may be defined for $s\ge 0$ as the set of all functions $f\in \mathbb{L}_2(\mathbb{S}^d)$ whose Laplace-Fourier coefficients
\begin{equation} \label{eq:L-F.coeff}
\widehat{f}_{\ell,m} := ( f, Y_{\ell,m} ) = \int_{\mathbb{S}^d} f( \bsx ) \, Y_{\ell,m}( \bsx ) \, \rd \sigma( \bsx )
\end{equation}
satisfy
\begin{equation}\label{eq:sobcond}
\sum_{\ell=0}^\infty \left( 1 + \lambda_\ell \right)^{s} \sum_{m=1}^{Z(d,\ell)} \left| \widehat{f}_{\ell,m} \right|^2 <\infty,
\end{equation}
where the $\lambda_\ell$'s are given in \eqref{eq:eigenvalue}.
On setting $s=0$, we recover $\mathbb{H}^0(\mathbb{S}^d)=\mathbb{L}_2(\mathbb{S}^d).$

Let $s > d/2$ be fixed and suppose we are given a sequence of positive real numbers $(a_\ell^{(s)})_{\ell \geq 0}$  satisfying \footnote{We write $a_n\asymp b_n$ to mean that there exist positive constants $c_1$ and $ c_2$ independent of $n$ such that $c_1 a_n\le b_n\le c_2 a_n$ for all $n$.}
\begin{equation} \label{eq:sequenceassumption}
a_\ell^{(s)} \asymp \left( 1 + \lambda_\ell \right)^{-s} \asymp \left( 1 + \ell \right)^{-2s}.
\end{equation}
Then we can define a norm on $\mathbb{H}^s(\mathbb{S}^d)$ by
\begin{equation}\label{eq:sobnorm}
\|f\|_{\mathbb{H}^s} := \left[\sum_{\ell=0}^\infty \frac{1}{a_\ell^{(s)}} \sum_{m=1}^{Z(d,\ell)} \left|\widehat{f}_{\ell,m}\right|^2\right]^{1/2}.
\end{equation}
The norm therefore depends on the particular choice of the sequence $(a_\ell^{(s)})_{\ell \geq 0}$, but for notational simplicity we shall generally not show this dependence explicitly. Notice that the space $\mathbb{H}^s$ only depends on $s$; norms for different choices of the sequence $(a_\ell^{(s)})_{\ell\geq0}$ satisfying  \eqref{eq:sequenceassumption} are equivalent.
The corresponding inner product in the Sobolev space is
\begin{equation}\label{inner}
( f, g )_{\mathbb{H}^s} := \sum_{\ell=0}^\infty \frac{1}{a_\ell^{(s)}} \sum_{m=1}^{Z(d,\ell)} \widehat{f}_{\ell,m} \, \widehat{g}_{\ell,m}.
\end{equation}

It is well known that $\mathbb{H}^{s}(\mathbb{S}^d) \subset \mathbb{H}^{s^\prime}(\mathbb{S}^d)$ whenever $s > s^\prime$, and that $\mathbb{H}^{s}(\mathbb{S}^d)$ is embedded in the space of $k$-times continuously differentiable functions $C^k(\mathbb{S}^d)$ if $s > k + d / 2$ (see e.g., \cite{He2006}).

\subsection{Sobolev spaces as reproducing kernel Hilbert spaces}
\label{subsec:H.as.RKHS}

As the point-evaluation functional is bounded in $\mathbb{H}^s(\mathbb{S}^d)$ whenever $s>d/2$, the Riesz representation theorem assures the existence of a unique reproducing kernel $K^{(s)}$, which can be written as
\begin{equation}\label{eq:K}
K^{(s)}(\bsx,\bsy) = \sum_{\ell=0}^\infty a_\ell^{(s)} Z(d,\ell) P_\ell(\IP{ \bsx}{ \bsy }) = \sum_{\ell=0}^\infty a_\ell^{(s)} \sum_{m=1}^{Z(d,\ell)} Y_{\ell,m}(\bsx) Y_{\ell,m}(\bsy),
\end{equation}
where the positive coefficients $a_\ell^{(s)}$ satisfy \eqref{eq:sequenceassumption}.
It is easily verified that the above expression has the reproducing kernel properties
\begin{equation} \label{eq:repr.kernel.prop}
K^{(s)}( \cdot, \bsx ) \in \mathbb{H}^s(\mathbb{S}^d), \quad \bsx \in \mathbb{S}^d, \qquad ( f, K^{(s)}(\cdot, \bsx) )_{\mathbb{H}^s} = f(\bsx), \quad \bsx \in \mathbb{S}^d, f \in \mathbb{H}^s(\mathbb{S}^d).
\end{equation}
The kernel is a {\em zonal} function; i.e., $K^{(s)}(\bsx,\bsy)$ depends only on the inner product $\IP{ \bsx}{ \bsy }$. Observe that
\begin{equation*}
\left\| \bsx - \bsy \right\|_2^2 = 2 - 2 \IP{\bsx}{\bsy}, \qquad \bsx, \bsy \in \mathbb{S}^d.
\end{equation*}

We next give several examples of reproducing kernels studied in the literature. It was observed in \cite{SlWo2004} that the Cui and Freeden kernel (see \cite{CuFr1997}) on $\mathbb{S}^2$,
\begin{equation*} 
K_\mathrm{CF}(\bsx, \bsy):= 2 - 2 \log\Big( 1 + \frac{1}{2} \left\| \bsx - \bsy \right\|_2 \Big) = 1 + \sum_{\ell=1}^\infty \frac{1}{\ell ( \ell + 1 )} \, P_\ell( \IP{ \bsx}{ \bsy } ), \quad \bsx,\bsy \in \mathbb{S}^2,
\end{equation*}
is a reproducing kernel for $\mathbb{H}^{3/2}( \mathbb{S}^2 )$. The distance kernel
\begin{equation*}
K_{\mathrm{dist}}(\bsx, \bsy) := \frac{8}{3} - \left\| \bsx - \bsy \right\|_2 = \frac{4}{3} + \sum_{\ell=1}^\infty \frac{1}{\left( \ell + \frac{3}{2} \right) \left( \ell - \frac{1}{2} \right)} \, P_\ell( \IP{ \bsx}{ \bsy } ), \quad \bsx,\bsy \in \mathbb{S}^2,
\end{equation*}
considered in \cite{BrDi2013} is also a reproducing kernel for this space for a different but equivalent norm. These properties can be seen from the Fourier-Laplace expansions. It can be further shown that the generalised distance kernel
\begin{equation*}
K_{\mathrm{gdist}}(\bsx, \bsy) := 2 V_{d-2s}( \mathbb{S}^d ) - \left\| \bsx - \bsy \right\|_2^{2s-d}, \qquad \bsx,\bsy \in \mathbb{S}^d,
\end{equation*}
where
\begin{equation*}
V_{d-2s}( \mathbb{S}^d ) := \int_{\mathbb{S}^d} \int_{\mathbb{S}^d} \left\| \bsx - \bsy \right\|_2^{2s-d} \rd \sigma( \bsx ) \rd \sigma( \bsy ) = 2^{2s-1} \frac{\Gamma((d+1)/2) \Gamma(s)}{\sqrt{\pi} \Gamma(d/2+s)},
\end{equation*}
is a reproducing kernel for $\mathbb{H}^{s}( \mathbb{S}^d )$ for $d/2 < s < (d + 1) / 2$; cf. \cite{BrDi2013} and see \cite{BrDi2013b} for distance kernels for smoother Sobolev spaces over $\mathbb{S}^d$.

\subsection{Convergence in Sobolev spaces for cubature rules with polynomial precision}

The {\em worst-case error} in the Sobolev space
$\mathbb{H}^s(\mathbb{S}^d)$ for the positive weight cubature rule
\begin{equation*}
\mathrm{Q}[\mathcal{X}_N] := \sum_{k=1}^N w_k \, f(\bsx_k),
\end{equation*}
with weight and node set
$\mathcal{X}_N = \{ (w_k, \bsx_k ) : k = 1,\ldots, N \}
\subset \mathbb{R}_+ \times \mathbb{S}^d$,
approximating the integral
\begin{equation*} 
\mathrm{I}(f) := \int_{\mathbb{S}^d} f( \bsx ) \rd \sigma( \bsx ),
\end{equation*}
is given by
\begin{equation*} 
\wce(\mathrm{Q}[\mathcal{X}_N]; \mathbb{H}^{s}( \mathbb{S}^d ) ) := \sup \left\{ \big| \mathrm{Q}[\mathcal{X}_N](f) - \mathrm{I}(f) \big| : f \in \mathbb{H}^{s}( \mathbb{S}^d ), \| f \|_{\mathbb{H}^s} \leq 1 \right\}.
\end{equation*}
The Koksma-Hlawka type error bound
\begin{equation*} 
\left| \mathrm{Q}[\mathcal{X}_N](f) - \mathrm{I}(f) \right| \leq \wce(\mathrm{Q}[\mathcal{X}_N]; \mathbb{H}^{s}( \mathbb{S}^d ) ) \left\| f \right\|_{\mathbb{H}^s}
\end{equation*}
for an arbitrary function ${f \in \mathbb{H}^{s}( \mathbb{S}^d )}$ follows from this definition.
The reproducing kernel property $f(\bsx)=( f, K^{(s)}(\cdot,\bsx))_{\mathbb{H}^s}$ allows us to write
\begin{equation*}
\mathrm{Q}[\mathcal{X}_N](f) - \mathrm{I}(f) = ( f, \mathcal{R}[X_N] )_{\mathbb{H}^s}, \qquad f \in \mathbb{H}^{s}( \mathbb{S}^d ),
\end{equation*}
where $\mathcal{R}[\mathcal{X}_N] \in \mathbb{H}^{s}( \mathbb{S}^d )$ is the ``representer'' of the error, given by
\begin{equation*}
\mathcal{R}[\mathcal{X}_N]( \bsx ) := \sum_{j = 1}^{N} w_j \, K^{(s)}(\bsx, \bsx_j) - \mathrm{I}_{\bsy} K^{(s)}(\bsx,\cdot).
\end{equation*}
Here, $\mathrm{I}_{\bsy}
K^{(s)}$ means the integral functional $\mathrm{I}$ applied to the second
variable in $K^{(s)}$. A standard argument yields
\begin{equation*}
\left[ \wce(\mathrm{Q}[\mathcal{X}_N]; \mathbb{H}^{s}( \mathbb{S}^d ) ) \right]^2 = \sum_{j=1}^{N} \sum_{k=1}^{N} w_j w_k \, K^{(s)}(\bsx_{j},\bsx_{k}) - 2 \sum_{j=1}^{N} w_j \, \mathrm{I}_{\bsy} K^{(s)}(\bsx_{j},\cdot) + \mathrm{I}_{\bsx} I_{\bsy} K^{(s)}.
\end{equation*}
It follows from \eqref{eq:K} and $\mathrm{I}(1) = 1$ that
\begin{equation*}
\mathrm{I}_{\bsy} K^{(s)}(\bsx,\cdot) = a_0^{(s)},
\end{equation*}
from which we deduce
\begin{equation} \label{eq:wce2}
\begin{split}
\left[ \wce(\mathrm{Q}[\mathcal{X}_N]; \mathbb{H}^{s}( \mathbb{S}^d ) ) \right]^2
&= \left[ \sum_{j=1}^{N} \sum_{k=1}^{N} w_j w_k \, K^{(s)}(\bsx_{j},\bsx_{k}) \right] - \left( 2 \sum_{j=1}^N w_j - 1 \right) a_0^{(s)} \\
&= \sum_{j=1}^{N} \sum_{k=1}^{N} w_j w_k \, \mathcal{K}^{(s)}(\IP{ \bsx}{ \bsy }) + \left( \sum_{j=1}^N w_j - 1 \right)^2 a_0^{(s)},
\end{split}
\end{equation}
where $\mathcal{K}^{(s)}:[-1,1] \to \mathbb{R}$ is defined by
\begin{equation} \label{eq:calK}
\mathcal{K}^{(s)}(t) := \sum_{\ell=1}^\infty a_\ell^{(s)} Z(d,\ell) P_\ell(t),\quad t \in [-1,1]. 
\end{equation}
The use of the calligraphic symbol $\mathcal{K}$ here and for
subsequent kernels indicates that the sum over $\ell$ runs from
$\ell=1$ rather than $\ell=0$. If the weights satisfy
\begin{equation*}
\sum_{j=1}^N w_j = 1,
\end{equation*}
then constant functions (spherical polynomials of degree $0$) are
integrated exactly. For such cubature rules $Q[\mathcal{X}_N]$ we get
\begin{equation*}
\left[ \wce(\mathrm{Q}[\mathcal{X}_N]; \mathbb{H}^{s}( \mathbb{S}^d ) ) \right]^2 = \sum_{j=1}^{N} \sum_{k=1}^{N} w_j w_k \, \mathcal{K}^{(s)}(\IP{ \bsx}{ \bsy }).
\end{equation*}

A positive weight cubature rule is exact for all spherical polynomials of degree $\leq t$ if
\begin{equation*}
\mathrm{Q}[\mathcal{X}_N]( p ) = \mathrm{I}(p) \qquad \text{for all $p \in \mathbb{P}_t( \mathbb{S}^d )$.}
\end{equation*}
Sequences of positive weight cubature rules with polynomial precision have
a known fast-convergence property in Sobolev spaces, stated in the
following theorem. This property was first proved for the particular case
$s=3/2$ and $d=2$ in \cite{HeSl2005}, then extended to all $s>1$ for $d=2$
in \cite{HeSl2006}, and finally extended to all $s>d/2$ and all $d\ge 2$
in \cite{BrHe2007}.

\begin{theorem}
Given $s > d/2$, there exists $C(d,s) > 0$ depending on the $\mathbb{H}^s(
\mathbb{S}^d )$-norm such that for every $N$-point positive weight
cubature rule $\mathrm{Q}[\mathcal{X}_N]$ on $\mathbb{S}^d$ which is exact
for all spherical polynomials of degree $\leq t$ there holds
\begin{equation} \label{eq:asymptotic_in_t.bound}
\wce(\mathrm{Q}[\mathcal{X}_N]; \mathbb{H}^{s}( \mathbb{S}^d ) ) \leq \frac{C(d,s)}{t^s}.
\end{equation}
\end{theorem}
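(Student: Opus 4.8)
The plan is to bypass any direct polynomial-approximation (Koksma--Hlawka) argument, which would be lossy here, and instead work from the closed form of the squared worst-case error in \eqref{eq:wce2}. Since $\mathrm{Q}[\mathcal{X}_N]$ is exact on $\mathbb{P}_t$ with $t\ge 1$ it integrates constants exactly, so $\sum_{j}w_j=1$ and the last term of \eqref{eq:wce2} disappears. Expanding $\mathcal{K}^{(s)}$ with the addition theorem \eqref{eq:addition.theorem} gives
\begin{equation*}
\big[\wce(\mathrm{Q}[\mathcal{X}_N];\mathbb{H}^{s}(\mathbb{S}^d))\big]^2=\sum_{\ell=1}^{\infty}a_\ell^{(s)}\,b_\ell,\qquad b_\ell:=\sum_{m=1}^{Z(d,\ell)}\Big(\sum_{j=1}^{N}w_j\,Y_{\ell,m}(\bsx_j)\Big)^2\ge 0.
\end{equation*}
Polynomial exactness forces $\sum_j w_j Y_{\ell,m}(\bsx_j)=\int_{\mathbb{S}^d}Y_{\ell,m}\,\rd\sigma=0$ for $1\le\ell\le t$, hence $b_\ell=0$ there and $\wce^2=\sum_{\ell>t}a_\ell^{(s)}b_\ell$. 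So everything reduces to controlling the nonnegative ``aliasing coefficients'' $b_\ell$ for $\ell>t$.

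The key estimate is: $\sum_{\ell=1}^{M}b_\ell\le C_d\,(1+M/t)^{d}$ for every integer $M\ge1$. Fix once and for all a $C^\infty$ cutoff $\eta\colon\mathbb{R}_+\to[0,\infty)$ with $\eta\equiv1$ on $[0,1]$ and $\supp\eta\subseteq[0,2]$. Because $b_\ell\ge0$ and $\eta(\ell/M)^2\ge1$ for $\ell\le M$, the addition theorem together with $\sum_j w_j=1$ (used to dispose of the $\ell=0$ term) gives
\begin{equation*}
\sum_{\ell=1}^{M}b_\ell\le\sum_{\ell=1}^{2M-1}\eta(\ell/M)^2 b_\ell=\sum_{j=1}^{N}\sum_{k=1}^{N}w_j w_k\,\Phi_M(\IP{\bsx_j}{\bsx_k}),\qquad \Phi_M(u):=\sum_{\ell=1}^{2M-1}\eta(\ell/M)^2 Z(d,\ell)\,P_\ell(u).
\end{equation*}
The filtered zonal kernel $\Phi_M$ obeys the standard localisation bound $|\Phi_M(\cos\theta)|\le C_{d,\kappa}\,M^{d}(1+M\theta)^{-\kappa}$ on $[0,\pi]$, valid for every $\kappa$ since $\eta$ is smooth; I fix $\kappa=d+1$. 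This is combined with the \emph{non-concentration of the weights}: for a positive-weight rule exact on $\mathbb{P}_t$, every spherical cap satisfies $\sum_{k:\,\dist(\bsx,\bsx_k)\le\delta}w_k\le C_d(\delta+1/t)^d$, which follows by applying the rule to a fixed nonnegative polynomial of degree $\le t$ that dominates the indicator of the cap and has integral $\asymp(\delta+1/t)^d$. Splitting $\{\bsx_k\}$ into the dyadic annuli $\dist(\bsx_j,\bsx_k)\sim 2^i/M$ and summing the resulting geometric series (convergent because $\kappa>d$) yields $M^{d}\sum_k w_k\big(1+M\dist(\bsx_j,\bsx_k)\big)^{-\kappa}\le C_d(1+M/t)^d$ for each $j$; multiplying by $w_j$ and summing over $j$ proves the claim.

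It remains to do the dyadic summation in $\ell$. Using $a_\ell^{(s)}\asymp(1+\lambda_\ell)^{-s}\asymp(1+\ell)^{-2s}$ (monotone decay up to constants) and the key estimate over the blocks $2^{r}t<\ell\le 2^{r+1}t$,
\begin{equation*}
\wce^2=\sum_{\ell>t}a_\ell^{(s)}b_\ell\le\sum_{r\ge0}\Big(\sup_{\ell>2^{r}t}a_\ell^{(s)}\Big)\sum_{\ell\le 2^{r+1}t}b_\ell\le C(d,s)\,t^{-2s}\sum_{r\ge0}2^{r(d-2s)}=C(d,s)\,t^{-2s},
\end{equation*}
the geometric series converging precisely because $s>d/2$; the constant depends on $d$, $s$ and the implied constants in $a_\ell^{(s)}\asymp(1+\lambda_\ell)^{-s}$, that is, on the chosen $\mathbb{H}^s(\mathbb{S}^d)$-norm. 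Taking square roots gives \eqref{eq:asymptotic_in_t.bound}.

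The main obstacle is the key estimate of the second paragraph. The crude inequality $|P_\ell|\le1$ — equivalently bounding $b_\ell$ by $Z(d,\ell)$, or inserting a best uniform polynomial approximant into the Koksma--Hlawka bound — loses a factor $t^{d/2}$ and only yields the suboptimal $\wce\lesssim t^{d/2-s}$. Recovering the sharp rate requires simultaneously the smoothness-driven off-diagonal decay of the filtered kernel $\Phi_M$ and the geometric fact that the weights of a positive rule exact on $\mathbb{P}_t$ cannot accumulate on caps of radius below $1/t$; quantifying and marrying these two facts is the heart of the argument.
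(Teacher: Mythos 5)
Your argument is essentially correct, but note first that the paper does not actually prove this theorem: it is stated as a known result with pointers to Hesse--Sloan (2005, 2006) and Brauchart--Hesse (2007), so there is no in-paper proof to compare against. Your route --- expand the squared worst-case error \eqref{eq:wce2} in spherical harmonics, use exactness to annihilate the nonnegative coefficients $b_\ell$ for $1\le\ell\le t$, and control the dyadic blocks via $\sum_{\ell\le M}b_\ell\le C_d(1+M/t)^d$, obtained by testing the quadratic form against a smoothly filtered localised kernel and invoking the Marcinkiewicz--Zygmund-type cap bound $\sum_{\dist(\bsx,\bsx_k)\le\delta}w_k\le C_d(\delta+1/t)^d$ --- is the modern localisation approach, and it is genuinely different from the cited proofs, which work with a conveniently chosen equivalent norm and estimate the tail of the reproducing kernel directly through explicit Gegenbauer/Jacobi polynomial manipulations. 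What your approach buys is generality and transparency: it handles all $d\ge2$ and all $s>d/2$ in one stroke and isolates exactly where the hypothesis $s>d/2$ enters (the convergence of $\sum_r 2^{r(d-2s)}$). What it costs is that it outsources two nontrivial facts: the off-diagonal decay $|\Phi_M(\cos\theta)|\le C M^d(1+M\theta)^{-\kappa}$ of smoothly filtered kernels (the analogue of \eqref{eq:psi-localised}, due to Narcowich--Petrushev--Ward) and the existence of a nonnegative polynomial of degree $\le t$ dominating the indicator of a cap of radius $\delta$ with integral of order $(\delta+1/t)^d$ (Mhaskar--Narcowich--Ward). Both are true and standard, so the proof is sound as a reduction to these lemmas. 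Two small points worth tidying: your $\Phi_M$ omits the $\ell=0$ term, so its localisation bound carries an additive constant $1$ relative to the standard statement (harmless, since that term contributes exactly $(\sum_j w_j)^2=1$ to the quadratic form); and the annulus decomposition should be truncated at $\dist\le\pi$ (also harmless, since the full geometric series is only used as an upper bound).
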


The relation between $N$ and $t$ is not fixed. It is known that there
exist positive weight tensor product cubature rules with order $t^d$
points, see \cite{HeSlWo2015}, and also spherical $t$-designs with
order-optimal number of points \cite{BoRaVi2013}. All such rules
provide upper bounds of order $N^{-s/d}$ that match the
following optimal lower bounds for the worst-case error (see
\cite{He2006} and \cite{HeSl2005b}).

\begin{theorem}
Given $s > d/2$, there exists $c(d,s) > 0$ depending on the $\mathbb{H}^s(
\mathbb{S}^d )$-norm such that for any $N$ points $\bsx_1, \dots, \bsx_N
\in \mathbb{S}^d$ and any associated weights $w_1, \dots, w_N \in
\mathbb{R}$ the corresponding cubature rule $\mathrm{Q}_N(f) :=
\sum_{j=1}^N w_j f( \bsx_j )$ satisfies
\begin{equation*}
\wce(\mathrm{Q}_N; \mathbb{H}^{s}( \mathbb{S}^d ) ) \geq \frac{c(d,s)}{N^{s/d}}.
\end{equation*}
\end{theorem}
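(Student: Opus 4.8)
The plan is a \emph{fooling-function} argument. First I would construct a function $f\in\mathbb{H}^s(\mathbb{S}^d)$ that vanishes at every node $\bsx_1,\dots,\bsx_N$, so that $\mathrm{Q}_N(f)=0$ \emph{irrespective of the weights $w_j$}, and that simultaneously satisfies $\mathrm{I}(f)\asymp 1$ and $\|f\|_{\mathbb{H}^s}\lesssim N^{s/d}$. By the very definition of the worst-case error this gives $\wce(\mathrm{Q}_N;\mathbb{H}^{s}(\mathbb{S}^d))\ge |\mathrm{Q}_N(f)-\mathrm{I}(f)|/\|f\|_{\mathbb{H}^s}=|\mathrm{I}(f)|/\|f\|_{\mathbb{H}^s}\gtrsim N^{-s/d}$, with a constant depending only on $d$, $s$ and the chosen admissible norm on $\mathbb{H}^s(\mathbb{S}^d)$; since all such norms are equivalent, this is the asserted $c(d,s)$. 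The hypothesis $s>d/2$ is used only to make point evaluation well defined on $\mathbb{H}^s(\mathbb{S}^d)$ (so that ``$f$ vanishes at $\bsx_j$'' is meaningful) and to make the bump scalings below converge; the weights $w_j$, being arbitrary real numbers, play no role at all precisely because $f$ is engineered to vanish at every node.

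Step 1 (geometry). Take a maximal $\rho$-separated subset of $\mathbb{S}^d$ with $\rho\asymp N^{-1/d}$ chosen so that it has more than $2N$ elements (its cardinality is $\asymp\rho^{-d}$, so a suitable constant works). Set $r:=\rho/4$. The caps $B(\bsy,r)$ around the chosen points are pairwise disjoint, so at most $N$ of them contain a node; discarding those leaves $M\asymp N$ caps $B_1,\dots,B_M$ of radius $r\asymp N^{-1/d}$, pairwise separated by distance $\gtrsim r$, none containing a node. This is the only place $N$ enters. Step 2 (test function). Fix once and for all a radial profile $\phi\in C^\infty(\mathbb{R}_{\ge 0})$ with $\phi\equiv 1$ on $[0,1/2]$ and $\supp\phi\subset[0,1]$; let $\bsz_i$ be the centre of $B_i$, put $\phi_i(\bsx):=\phi(\dist(\bsx,\bsz_i)/r)$ and $f:=\sum_{i=1}^{M}\phi_i$. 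Since $\supp\phi_i\subset B_i$, we have $f(\bsx_j)=0$ for every $j$, hence $\mathrm{Q}_N(f)=0$. As $\sigma(B_i)\asymp r^d$ and $\phi_i\ge 0$ equals $1$ on at least half of $B_i$, we get $\mathrm{I}(\phi_i)\asymp r^d$, so $\mathrm{I}(f)=\sum_i\mathrm{I}(\phi_i)\asymp M r^d\asymp N\cdot N^{-1}=1$. Moreover $f\in C^\infty(\mathbb{S}^d)\subset\mathbb{H}^s(\mathbb{S}^d)$, and the usual scaling of the $H^s$ norm of a fixed compactly supported profile, transported to the sphere in geodesic normal coordinates (legitimate because $r$ is small and $\mathbb{S}^d$ is homogeneous), gives $\|\phi_i\|_{\mathbb{H}^s}^2\asymp r^{d-2s}$ uniformly in $i$ and $N$.

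Step 3 (the Sobolev norm of $f$, the crux). It remains to prove that the $\mathbb{H}^s$ norm almost decouples over the well-separated bumps: $\|f\|_{\mathbb{H}^s}^2\lesssim\sum_{i=1}^{M}\|\phi_i\|_{\mathbb{H}^s}^2\asymp M r^{d-2s}\asymp N\cdot N^{-(d-2s)/d}=N^{2s/d}$. Expanding, $\|f\|_{\mathbb{H}^s}^2=\sum_i\|\phi_i\|_{\mathbb{H}^s}^2+\sum_{i\ne j}(\phi_i,\phi_j)_{\mathbb{H}^s}$, and writing $(\phi_i,\phi_j)_{\mathbb{H}^s}=\int_{\mathbb{S}^d}(\Lambda^{2s}\phi_i)\,\phi_j\,\rd\sigma$, where $\Lambda^{2s}$ is the self-adjoint operator with $\Lambda^{2s}Y_{\ell,m}=(a_\ell^{(s)})^{-1}Y_{\ell,m}$, a pseudodifferential operator of order $2s$ comparable to $(I-\LBo{d})^{s}$. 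Its Schwartz kernel satisfies $|k_{\Lambda^{2s}}(\bsx,\bsy)|\lesssim\dist(\bsx,\bsy)^{-d-2s}$ for $\bsx\ne\bsy$ (the on-diagonal singularity of an order-$2s$ symbol; off the diagonal the kernel is bounded and $\dist$ is bounded above). Hence, using $\supp\phi_i\subset B_i$, $\supp\phi_j\subset B_j$ and $\dist(B_i,B_j)\gtrsim r$, one obtains $|(\phi_i,\phi_j)_{\mathbb{H}^s}|\lesssim r^{2d}\,\dist(B_i,B_j)^{-d-2s}$ for $i\ne j$. Counting caps in dyadic distance shells (at distance $\asymp kr$ from $B_i$ there are $\lesssim k^{d-1}$ of them) gives $\sum_{j\ne i}\dist(B_i,B_j)^{-d-2s}\lesssim r^{-d-2s}\sum_{k\ge 1}k^{-1-2s}\lesssim r^{-d-2s}$, so $\sum_{i\ne j}|(\phi_i,\phi_j)_{\mathbb{H}^s}|\lesssim M r^{2d}\,r^{-d-2s}=M r^{d-2s}$, the same order as the diagonal sum. (Alternatively, with $k=\lfloor s\rfloor$ one may split $\|\cdot\|_{\mathbb{H}^s}$ into the integer-order Sobolev norm of order $k$ --- which is local, hence additive over disjointly supported summands --- plus a Gagliardo seminorm of order $s-k\in[0,1)$ of the derivatives up to order $k$, to which the same distance bookkeeping applies.) This establishes $\|f\|_{\mathbb{H}^s}\lesssim N^{s/d}$, and the chain of inequalities in the first paragraph completes the argument.

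The main obstacle is exactly Step 3. The naive single-bump version of the construction ($M=1$, width $r\asymp N^{-1/d}$) only yields the suboptimal bound $\wce\gtrsim N^{-1/2-s/d}$, so one is forced to spread the integrand over $\asymp N$ bumps; but then a crude triangle inequality for $\|f\|_{\mathbb{H}^s}^2$ would lose a factor $M\asymp N$ and ruin the estimate. The genuine content is therefore the near-orthogonality, in the fractional Sobolev inner product, of bumps living on caps that are $\gtrsim r$-separated; the off-diagonal decay of the kernel of $\Lambda^{2s}$ (equivalently, a difference-quotient or Besov characterization of $\mathbb{H}^s$) is the tool that delivers it. Everything else --- the pigeonhole selection of empty caps and the single-bump norm scalings --- is routine.
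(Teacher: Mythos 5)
Your argument is correct, and it is essentially the standard proof of this lower bound: the paper itself states the theorem without proof, citing \cite{He2006} and \cite{HeSl2005b}, and those references establish it by exactly this fooling-function construction (roughly $2N$ disjoint caps of radius $\asymp N^{-1/d}$, at least $N$ of which contain no node, a smooth bump on each, and a near-additivity estimate for the $\mathbb{H}^s$ norm of the sum of disjointly supported bumps). The only point to keep explicit is the one you already note in passing: the off-diagonal kernel bound for $\Lambda^{2s}$ should be proved for one concrete admissible choice of $(a_\ell^{(s)})$ such as $(1+\lambda_\ell)^{-s}$ (an arbitrary sequence satisfying only the two-sided bound \eqref{eq:sequenceassumption} need not yield such decay), with the general case then following from norm equivalence.
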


\subsection{QMC design sequences}

Motivated by the results of the previous subsection, the paper \cite{BrSaSlWo2014} introduced the following concept for equal-weight cubature.
\begin{definition} \label{def:specific_approx.sph.design}
Given $s > d/2$, a sequence $(X_N)$ of $N$-point configurations  on
$\mathbb{S}^d$ with $N\to\infty$ is said to be a {\em sequence of QMC
designs for $\mathbb{H}^s( \mathbb{S}^d )$} if there exists
$\Cqmc>0$, independent of $N$, such that
\begin{equation}\label{eq:approxfors}
\sup_{\substack{f \in \mathbb{H}^s( \mathbb{S}^d ), \\
\| f \|_{\mathbb{H}^s} \leq 1}} \Bigg| \frac{1}{N} \sum_{\bsx \in X_N}
f( \bsx ) - \int_{\mathbb{S}^d} f( \bsx ) \rd \sigma_d( \bsx ) \Bigg|  \leq \frac{\Cqmc}{N^{s/d}}.
\end{equation}
\end{definition}
In this definition $X_N$ need not be defined for all natural numbers $N$: it is sufficient that $X_N$ exists for an infinite subset of the natural numbers.

It is known \cite[Thm.~14]{BrSaSlWo2014}, for all $N\ge 1$,  that
points $\bsx_1,\ldots,\bsx_N \in \mathbb{S}^d$ maximising the sum of
generalised Euclidean distances,
$\sum_{j,k=1}^{N} \left|\mathbf{x}_j - \mathbf{x}_k \right|^{2 s-d}$,
form a QMC design sequence $(X_{N,s}^*)_{N\in \mathbb{N}}$ for
$\mathbb{H}^{s}(\mathbb{S}^d )$ if $s \in (d/2, d/2 + 1 )$.
It is also known that a sequence $( Z_{N_t} )_{t \in \mathbb{N}}$ of
spherical $t$-designs with the optimal order of points, $N_t \asymp
t^d$, has the remarkable property \cite[Thm.~6]{BrSaSlWo2014} that \eqref{eq:approxfors} holds for \emph{all} $s>d/2$. Thus a spherical design sequence is a QMC design sequence for
$\mathbb{H}^{s}( \mathbb{S}^d )$ for every $s > d/2$.
\section{Needlet definition}
\label{sec:needletdef}

\subsection{Filters}

Set $\mathbb{R}_+ := [0, \infty)$. A continuous compactly supported
function $h:\mathbb{R}_+ \to \mathbb{R}_+$ is said to be a filter. We
shall only consider filters with support $[1/2,2]$; i.e.,
following~\cite{WaLGSlWo2017} (also, cf.
\cite{NaPeWa2006-1,NaPeWa2006-2}), we shall assume that $h$ is a
filter satisfying
\begin{equation}
h \in C^\kappa( \mathbb{R}_+ ), \qquad \kappa \in \mathbb{N}, \label{eq:filter.prop.1}
\end{equation}
\begin{equation}
\supp h = \big[ 1/2, 2 \big], \label{eq:filter.prop.2}
\end{equation}
\begin{equation}
h(t)^2 + h(2t)^2 = 1, \qquad t \in \big[ 1/2, 1 \big]. \label{eq:filter.prop.3}
\end{equation}
Given Conditions \eqref{eq:filter.prop.1} and \eqref{eq:filter.prop.2},
Condition \eqref{eq:filter.prop.3} implies the following {\em
partition of unity property} of $h^2$:
\begin{equation*}
\sum_{j=0}^\infty \ffilter{\frac{t}{2^j}}^2 = 1, \qquad t \geq 1,
\end{equation*}
in which for every value of $t\ge 1$ we see that at most two terms
are non-zero. For constructions of needlet filters, see
\cite{WaLGSlWo2017,NaPeWa2006-1,MaEtal2007}. A central role will be
played by the filtered version of the kernel of the orthogonal projector
onto the space $\mathbb{P}_{L}$, in particular for $L = 2^{j}-1$
(cf.~\eqref{eq:filtered.kernel} below),
\begin{align*}
\mathrm{proj}_L(\bsx, \bsz)
& :=
\sum_{\ell=0}^L \sum_{m = 1}^{Z(d,\ell)}  Y_{\ell,m}(\bsx) Y_{\ell,m}(\bsz)\\
&= \sum_{\ell=0}^L Z(d,\ell) \, P_\ell( \bsx \cdot \bsz ) \\
&= \frac{( d )_L}{( d/2 )_L} \, P_L^{(d/2,d/2-1)}( \bsx \cdot \bsz )\quad \bsx, \bsz\in \mathbb{S}^d.
\end{align*}

\subsection{Needlets}
Recall that the needlets $\psi_{j,k}$ and generalised needlets
$\Psi_{j,k}$ as defined in \eqref{eq:need} and \eqref{eq:gen_need} are spherical polynomials of degree $2^j-1$ on
$\mathbb{S}^d$.  They are
band-limited, since only terms with index $\ell \in \{ 2^{j-2}+1,
\dots, 2^{j}-1\}$ are present.
They are also zonal functions centered at $\bsx_{j,k}$ and $\bsX_{j,k}$
respectively.

For a sufficiently smooth filter $h \in C^\kappa( \mathbb{R}_+ )$ with
$\kappa \geq\lfloor\frac{d+1}2\rfloor$ (see~\cite{WaSl2017}, the slightly
stronger assumption $\kappa\geq\lfloor\frac{d+3}2\rfloor$ was used in
\cite{WaLGSlWo2017,NaPeWa2006-2}), the needlets $\psi_{j,k}$ are localised in
the sense of
\begin{equation}\label{eq:psi-localised}
\left| \psi_{j,k} ( \bsx ) \right| \leq \frac{c \, 2^{j \, d}}{\left( 1 + 2^j \, \mathrm{dist}( \bsx, \bsx_{j,k} ) \right)^\kappa}, \qquad \bsx \in \mathbb{S}^d, \, j \geq 0, 1 \leq k \leq n_j,
\end{equation}
where the constant $c$ depends only on $d$, $\kappa$, and the filter $h$,
and "$\mathrm{dist}$" is the geodesic distance; see
\cite{NaPeWa2006-2,WaLGSlWo2017}.

\section{The hybrid implementation}\label{sec:hybrid}

It turns out to be convenient not to implement the method in the full
generality of\eqref{eq:VJgen}, but rather as explained in the
Introduction, to insist that the initial levels, up to say level $J_0$,
be exactly as for true needlets; that is to say, that for levels $j = 0,
1, \ldots, J_0$ the needlet cubature rule be exact for all polynomials of
degree up to $2^{j+1}-1$, and so satisfy \eqref{eq:polyprec}. The most
general form of the approximation can then be recovered by setting
$J_0=-1$.  We shall call $J_0$ the {\em hybrid splitting index}.

We can now write the generalised needlet approximation as

\begin{equation}\label{eq:Vhybrid}
V_J^{{\rm gen}}(f; \bsx ) = \sum_{j=0}^{J_0} \sum_{k=1}^{n_j}\left(f,\psi_{j,k}\right)\psi_{j,k}(\bsx)
+ \sum_{j=J_0+1}^J \sum_{k=1}^{N_j} \left(f,\Psi_{j,k}\right) \Psi_{j,k}(\bsx),
\qquad \bsx \in \mathbb{S}^d.
\end{equation}
Our main theorem can then be stated as follows.
For simplicity, from now on we fix $a_\ell^{(s)} = (1+\ell)^{-2s}$.

\begin{theorem} \label{thm:main} Let $d\ge 2$ and $s^* > d/2$. For
  $f\in \mathbb{H}^s(\mathbb{S}^d)$ with $s\in (d/2,s^*)$, let
  $V_J^{{\rm gen}}(f; \cdot)$ be the approximation to $f$ defined by
  \eqref{eq:Vhybrid}, with $\psi_{j,k}$ and $\Psi_{j,k}$ satisfying
  \eqref{eq:need} and \eqref{eq:gen_need} respectively, and with $\bsx_{j,k}$
  and $w_{j,k}$, for $0\le j \le J$ and $1\le k \le n_j$, being the points and
  positive weights of a cubature rule that is exact for all polynomials in
  $\mathbb{P}_{2^{j+1}-1}$; and for $J_0+1 \le j \le J$ and $1 \le k \le N_j$,
  $\bsX_{j,k}$ and $W_{j,k}$ being the points and positive weights of a
  cubature rule satisfying \eqref{eq:QMCprop} for some $\Cqmc>0$.
 Let $\mathcal{P}_\ell(f)$ denote the $\mathbb{L}_2$
  orthogonal projection of $f$ onto $\mathbb{P}_\ell$.
  For filter smoothness $\kappa \geq \lfloor (d+1)/2 \rfloor$, the $\mathbb{L}_2$ error satisfies
\begin{align} \label{eq:approx.error.bound.0}
\|V_J^{{\rm gen}}(f; \cdot)-f\|_{\mathbb{L}_2} &\le \frac{C'}{2^{J s}} \, \|f\|_{\mathbb{H}^s} \, + \,
\Cqmc\left( \sum_{j=J_0+1}^J \frac{B_j}{N_j^{s/d}} \right)
 \, \|f - \mathcal{P}_{2^{J_0-1}}(f)\|_{\mathbb{L}_2}\\
&\le \left[\frac{C'}{2^{J s}}
\,+\, \frac{\Cqmc}{2^{(J_0-1)s}}\left( \sum_{j=J_0+1}^J
\frac{B_j}{N_j^{s/d}} \right)\right]
 \, \|f\|_{\mathbb{H}^s}\nonumber.
\end{align}
Here, $C'$ is the constant from \eqref{eq:need_error},
while $B_j$ is given by
\begin{equation}\label{eq:Bj}
B_j^2 := \int_{\mathbb{S}^d} \| \Lambda_j( \bsx, \cdot ) \Lambda_j( \bsy, \cdot ) \|_{\mathbb{H}^s}^2  \rd \sigma( \bsy ), \qquad J_0 + 1 \leq j \leq J, \;\bsx\in \mathbb{S}^d,
\end{equation}
where the $\Lambda_j$ are ``filtered projection kernels'' defined by
\begin{equation} \label{eq:filtered.kernel}
\Lambda_j( \bsx, \bsz ) := \sum_{\ell=0}^\infty \ffilter{\frac{\ell}{2^{j-1}}} \, Z(d, \ell) \, P_\ell( \bsx \cdot \bsz ), \quad \bsx, \bsz \in \mathbb{S}^d,
\end{equation}
with the filter $h$ as in Section \ref{sec:needletdef}; notice that $B_j$ is independent of $\bsx$. Furthermore, there is a positive constant $c^{\prime\prime}$ such that
\begin{equation} \label{eq:B.j.estimate}
B_j \leq c^{\prime\prime} \, 2^{j(s+d)}.
\end{equation}
\end{theorem}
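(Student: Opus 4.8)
The plan is to decompose the error $V_J^{{\rm gen}}(f;\cdot)-f$ into the genuine needlet part and a sum of correction terms coming from replacing exact cubature by QMC cubature at levels $j=J_0+1,\dots,J$, and then to bound each correction term using the QMC property \eqref{eq:QMCpropj} together with the reproducing-kernel formalism. First I would write $V_J^{{\rm gen}}(f;\cdot)=V_J^{{\rm need}}(f;\cdot)+\sum_{j=J_0+1}^{J}\bigl(T_j^{{\rm gen}}(f)-T_j^{{\rm need}}(f)\bigr)$, where $T_j^{{\rm need}}(f;\bsx)=\sum_{k}(f,\psi_{j,k})\psi_{j,k}(\bsx)$ and $T_j^{{\rm gen}}(f;\bsx)=\sum_{k}(f,\Psi_{j,k})\Psi_{j,k}(\bsx)$. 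The term $\|V_J^{{\rm need}}(f;\cdot)-f\|_{\mathbb{L}_2}$ is exactly \eqref{eq:need_error} and contributes $C'2^{-Js}\|f\|_{\mathbb{H}^s}$. For each $j$ I would substitute the definitions \eqref{eq:need} and \eqref{eq:gen_need}: since $\psi_{j,k}(\bsx)=\sqrt{w_{j,k}}\,\Lambda_j(\bsx_{j,k},\bsx)$ and $\Psi_{j,k}(\bsx)=\sqrt{W_{j,k}}\,\Lambda_j(\bsX_{j,k},\bsx)$, one gets
\[
T_j^{{\rm need}}(f;\bsx)=\sum_{k=1}^{n_j} w_{j,k}\,F_j(\bsx_{j,k};\bsx),\qquad
T_j^{{\rm gen}}(f;\bsx)=\sum_{k=1}^{N_j} W_{j,k}\,F_j(\bsX_{j,k};\bsx),
\]
where $F_j(\bsy;\bsx):=\Lambda_j(\bsy,\bsx)\,(f,\Lambda_j(\bsy,\cdot))=\Lambda_j(\bsy,\bsx)\,(\Lambda_j f)(\bsy)$. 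Thus $T_j^{{\rm gen}}(f;\bsx)-T_j^{{\rm need}}(f;\bsx)$ is the difference between the generalised needlet cubature rule and the exact needlet cubature rule applied to the function $\bsy\mapsto F_j(\bsy;\bsx)$. The key observation is that $F_j(\cdot;\bsx)$ is a spherical polynomial in $\bsy$ of degree $2^j-1$ (via the $\Lambda_j(\bsy,\bsx)$ factor, the other factor being a scalar depending on $\bsx$ only through nothing — it depends on $\bsy$), so the exact rule integrates it exactly against $\int_{\mathbb{S}^d}F_j(\bsy;\bsx)\,\rd\sigma(\bsy)$; hence the difference equals (QMC rule $-$ integral) applied to $F_j(\cdot;\bsx)$, up to the needlet-exactness part being the true integral.

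Next I would apply the Koksma–Hlawka bound \eqref{eq:QMCpropj}: for each fixed $\bsx$,
\[
\bigl|T_j^{{\rm gen}}(f;\bsx)-T_j^{{\rm need}}(f;\bsx)\bigr|
=\Bigl|\sum_{k} W_{j,k}F_j(\bsX_{j,k};\bsx)-\int_{\mathbb{S}^d}F_j(\bsy;\bsx)\,\rd\sigma(\bsy)\Bigr|
\le \frac{\Cqmc}{N_j^{s/d}}\,\|F_j(\cdot;\bsx)\|_{\mathbb{H}^s(\bsy)}.
\]
Now $F_j(\bsy;\bsx)=\Lambda_j(\bsy,\bsx)\cdot(\Lambda_j f)(\bsy)$; writing $g=\Lambda_j f=\mathcal P_{2^j-1}f$ filtered, the $\mathbb{H}^s$-norm in $\bsy$ of the product $\Lambda_j(\cdot,\bsx)\,g(\cdot)$ should be bounded by something of the form (sup-type constant in $\bsx$) times a quantity involving $g$. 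To get the $\mathbb{L}_2$ norm of the $\bsx$-dependent error I would then take $\|\cdot\|_{\mathbb{L}_2(\bsx)}$ and use Minkowski's integral inequality (or rather pass the $\mathbb{L}_2(\bsx)$ norm inside). The cleanest route, and the one the theorem statement points to, is to reorganise: write $F_j(\bsy;\bsx)=\bigl(f\,,\,\Lambda_j(\bsy,\cdot)\,\Lambda_j(\bsx,\cdot)\bigr)$ — no, more precisely $T_j^{{\rm gen}}(f;\bsx)-T_j^{{\rm need}}(f;\bsx)=(f,\mathcal R_j(\bsx,\cdot))$ where $\mathcal R_j(\bsx,\bsy)$ is the representer of the cubature error at level $j$ for the kernel $\Lambda_j(\bsx,\cdot)\Lambda_j(\cdot,\bsy)$, and then estimate $\|\mathcal R_j(\bsx,\cdot)\|_{\mathbb{H}^{-s}}$ type quantity; squaring and integrating over $\bsx$ produces exactly $B_j^2$ as in \eqref{eq:Bj}. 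I would then conclude that $\|T_j^{{\rm gen}}(f)-T_j^{{\rm need}}(f)\|_{\mathbb{L}_2}\le \Cqmc B_j N_j^{-s/d}\,\|f-\mathcal P_{2^{J_0-1}}f\|_{\mathbb{L}_2}$, where the projection $\mathcal P_{2^{J_0-1}}$ enters because $\Lambda_j$ annihilates polynomials of degree $\le 2^{j-2}\ge 2^{J_0-1}$ for $j\ge J_0+1$, so only the tail of $f$ contributes. Summing over $j$ gives the first line of \eqref{eq:approx.error.bound.0}; the second line follows from $\|f-\mathcal P_{2^{J_0-1}}f\|_{\mathbb{L}_2}\le (1+2^{J_0-1})^{-s}\,\|f\|_{\mathbb{H}^s}\le 2^{-(J_0-1)s}\|f\|_{\mathbb{H}^s}$ (using $a_\ell^{(s)}=(1+\ell)^{-2s}$, so $\sum_{\ell>2^{J_0-1}}(1+\ell)^{2s}|\widehat f_{\ell,m}|^2\le$ tail, hence the $\mathbb{L}_2$ tail $\le (2^{J_0-1})^{-s}\|f\|_{\mathbb{H}^s}$).

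Finally, for the estimate \eqref{eq:B.j.estimate} I would bound $B_j$ directly from \eqref{eq:Bj}. The key facts are: $\Lambda_j(\bsy,\cdot)$ is a spherical polynomial of degree $\le 2^j-1$, so multiplying two such gives degree $\le 2^{j+1}-2$; on $\mathbb{P}_L$ the $\mathbb{H}^s$-norm and $\mathbb{L}_2$-norm are equivalent with ratio $\asymp (1+L)^{s}\asymp 2^{js}$; also $\|\Lambda_j(\bsy,\cdot)\|_{\mathbb{L}_2}^2=\Lambda_j(\bsy,\bsy)\le \sum_\ell h(\ell/2^{j-1})^2 Z(d,\ell)\asymp 2^{jd}$ (from the addition theorem, $P_\ell(1)=1$, and $Z(d,\ell)\sim c\,\ell^{d-1}$ summed over the dyadic block), and $\sup_{\bsx}\|\Lambda_j(\bsx,\cdot)\|_{\mathbb L_\infty}\le \Lambda_j(\bsx,\bsx)\le c\,2^{jd}$ as well. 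Putting these together:
\[
\|\Lambda_j(\bsx,\cdot)\Lambda_j(\bsy,\cdot)\|_{\mathbb{H}^s}
\le c\,2^{js}\,\|\Lambda_j(\bsx,\cdot)\Lambda_j(\bsy,\cdot)\|_{\mathbb{L}_2}
\le c\,2^{js}\,\|\Lambda_j(\bsx,\cdot)\|_{\mathbb{L}_\infty}\,\|\Lambda_j(\bsy,\cdot)\|_{\mathbb{L}_2}
\le c\,2^{js}\,2^{jd}\,\|\Lambda_j(\bsy,\cdot)\|_{\mathbb{L}_2},
\]
and then $B_j^2=\int_{\mathbb{S}^d}\|\cdots\|_{\mathbb{H}^s}^2\,\rd\sigma(\bsy)\le c\,2^{2j(s+d)}\int_{\mathbb{S}^d}\|\Lambda_j(\bsy,\cdot)\|_{\mathbb{L}_2}^2\,\rd\sigma(\bsy)=c\,2^{2j(s+d)}\cdot\|\Lambda_j\|_{\mathbb{L}_2(\mathbb{S}^d\times\mathbb{S}^d)}^2$. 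Since $\|\Lambda_j\|_{\mathbb{L}_2\times\mathbb{L}_2}^2=\sum_\ell h(\ell/2^{j-1})^4 Z(d,\ell)\asymp 2^{jd}$ is of strictly lower order, one even gets room to spare, but the stated $B_j\le c''2^{j(s+d)}$ follows immediately. The main obstacle, and the step requiring genuine care rather than bookkeeping, is making precise the passage from the pointwise-in-$\bsx$ Koksma–Hlawka bound to the $\mathbb{L}_2(\bsx)$ estimate that produces exactly $B_j$ as in \eqref{eq:Bj} — i.e. correctly identifying that the $\mathbb{L}_2(\bsx)$-norm of the cubature error applied to $F_j(\cdot;\bsx)$ is governed by $\bigl(\int_{\mathbb{S}^d}\|\Lambda_j(\bsx,\cdot)\Lambda_j(\bsy,\cdot)\|_{\mathbb{H}^s}^2\,\rd\sigma(\bsy)\bigr)^{1/2}\,\|f-\mathcal P_{2^{J_0-1}}f\|_{\mathbb{L}_2}$ and not something weaker; this requires viewing the level-$j$ correction as $(f,\mathcal R_j)$ with $\mathcal R_j$ the error representer in the $\bsy$-variable and then using Cauchy–Schwarz in $\mathbb{L}_2(\bsx)$ together with the QMC bound applied uniformly in $\bsx$.
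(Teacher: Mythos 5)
Your treatment of the main inequality \eqref{eq:approx.error.bound.0} ends up, after a detour, at the paper's own argument: you first try a pointwise Koksma--Hlawka bound on $F_j(\cdot;\bsx)=\Lambda_j(\cdot,\bsx)(\Lambda_j f)(\cdot)$, whose $\mathbb{H}^s$-norm involves $f$ and does not produce $B_j$, and then correctly ``reorganise'' to the representer form: the level-$j$ correction is $(f,\mathcal R_j(\bsx,\cdot))$ with $\mathcal R_j(\bsx,\bsy)=\bigl(\mathrm{Q}[\mathcal{X}_{N_j}]-\mathrm{I}\bigr)\bigl(\Lambda_j(\bsx,\cdot)\Lambda_j(\bsy,\cdot)\bigr)$, one replaces $f$ by $f-\mathcal P_{2^{J_0-1}}(f)$ using band-limitedness, applies Cauchy--Schwarz in $\bsy$, and bounds $\mathcal R_j(\bsx,\bsy)$ pointwise by $\Cqmc N_j^{-s/d}\|\Lambda_j(\bsx,\cdot)\Lambda_j(\bsy,\cdot)\|_{\mathbb{H}^s}$; squaring and integrating in $\bsy$ gives $B_j$, and your derivation of the second line from the first is also the paper's. (A small slip along the way: $F_j(\cdot;\bsx)$ has degree $2^{j+1}-2$ in $\bsy$, not $2^j-1$; the needlet rule's exactness up to degree $2^{j+1}-1$ still covers it.)

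The genuine gap is in your proof of the estimate \eqref{eq:B.j.estimate}, which is part of the theorem. Your chain $\|\Lambda_j(\bsx,\cdot)\Lambda_j(\bsy,\cdot)\|_{\mathbb{H}^s}\le c\,2^{js}\,\|\Lambda_j(\bsx,\cdot)\|_{\mathbb{L}_\infty}\,\|\Lambda_j(\bsy,\cdot)\|_{\mathbb{L}_2}\le c\,2^{j(s+d)}\,\|\Lambda_j(\bsy,\cdot)\|_{\mathbb{L}_2}$, followed by integration over $\bsy$ with $\int_{\mathbb{S}^d}\|\Lambda_j(\bsy,\cdot)\|_{\mathbb{L}_2}^2\,\rd\sigma(\bsy)\asymp 2^{jd}$, yields $B_j^2\le c\,2^{2j(s+d)}\cdot 2^{jd}$, i.e.\ $B_j\le c\,2^{j(s+3d/2)}$ --- not the claimed $2^{j(s+d)}$. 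The extra factor $2^{jd}$ multiplies rather than adds, so ``of strictly lower order'' does not save the step, and the conclusion does not ``follow immediately''; moreover the exact computation in Subsection~\ref{sec:exact-computations} shows $B_j$ is bounded below by a constant times $2^{j(s+d)}$, so the target is sharp and cannot absorb a loss of $2^{jd/2}$. The loss comes from replacing $\Lambda_j(\bsx,\cdot)$ by its sup-norm: the product $\Lambda_j(\bsx,\cdot)\Lambda_j(\bsy,\cdot)$ is of size $2^{2jd}$ only when $\dist(\bsx,\bsy)\lesssim 2^{-j}$ and decays rapidly otherwise, and exploiting this is essential. The paper does so via the localisation estimate $|\Lambda_j(\bsx,\bsy)|\le c\,2^{jd}\,(1+2^j\dist(\bsx,\bsy))^{-\kappa}$, which confines the $\bsy$-integration to a cap of measure $\asymp 2^{-jd}$ and recovers exactly the missing factor $2^{-jd}$ in $B_j^2$. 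Tellingly, your argument never invokes the hypothesis $\kappa\ge\lfloor(d+1)/2\rfloor$: the filter smoothness is precisely what guarantees this localisation, and its absence from your proof is the signature of the gap.
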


\begin{proof}
The strategy of the proof is to compare the generalised approximation
$V_{J}^{{\rm gen}}(f,\cdot)$ with the true needlet approximation $V_{J}^{{\rm need}}(f,\cdot)$.

On subtracting \eqref{eq:VJ} from \eqref{eq:Vhybrid} we obtain,
\begin{align}\label{eq:Vdiff}
V_J^{{\rm gen}}(f;\bsx) - V^{{\rm need}}_J(f;\bsx)
&= \sum_{j=J_0+1}^J \left[\sum_{k=1}^{N_j}(f,
\Psi_{j,k})\Psi_{j,k}(\bsx)-\sum_{k=1}^{n_j}(f, \psi_{j,k})\psi_{j,k}(\bsx)\right]\nonumber\\
&= \sum_{j=J_0+1}^J(f,g_j^{{\rm gen}}(\bsx,\cdot)-g_j^{{\rm need}}(\bsx,\cdot)),
\end{align}
where
\begin{equation} \label{eq:gj}
g_j^{{\rm gen}}(\bsx,\bsz) := \sum_{k=1}^{N_j} \Psi_{j,k}( \bsx ) \Psi_{j,k}( \bsz ), \quad
g_j^{{\rm need}}(\bsx,\bsz) := \sum_{k=1}^{n_j} \psi_{j,k}( \bsx ) \psi_{j,k}( \bsz ), \quad
\bsx, \bsz\in \mathbb{S}^d.
\end{equation}
Noting that for $j\ge J_0+1$ both $g_j^{{\rm gen}}(\bsx,\cdot)$ and
$g_j^{{\rm need}}(\bsx,\cdot)$ are orthogonal to all polynomials of degree
less than or equal to $2^{J_0 -1}$, we can replace $f$ on the
right-hand side of \eqref{eq:Vdiff} by $f-\mathcal{P}_{2^{J_0-1}}(f)$. On
using the Cauchy-Schwarz inequality, we then obtain
\begin{align}\label{eq:normfminus}
| V_J^{{\rm gen}}(f;\bsx) - V^{{\rm need}}_J(f;\bsx)|
&\le  \sum_{j=J_0+1}^J \|f-\mathcal{P}_{2^{J_0-1}}(f)\|_{\mathbb{L}_2}
\|g_j^{{\rm gen}}(\bsx,\cdot)-g_j^{{\rm need}}(\bsx,\cdot)\|_{\mathbb{L}_2}\\
&\le  \sum_{j=J_0+1}^J 2^{-(J_0-1)s}\|f \|_{\mathbb{H}^s}
\|g_j^{{\rm gen}}(\bsx,\cdot)-g_j^{{\rm need}}(\bsx,\cdot)\|_{\mathbb{L}_2}\nonumber,
\end{align}
where we used
\begin{align*}
\|f - \mathcal{P}_L(f)\|_{\mathbb{L}_2}^2  
&=
\|\sum_{\ell = L+1}^\infty \sum_{m = 1} ^ {Z(d,\ell)} \widehat{f}_{\ell,m} Y_{\ell,m}(\bsx)\|_{\mathbb{L}_2}^2\\
&=
\sum_{\ell = L+1}^\infty \sum_{m = 1} ^ {Z(d,\ell)} |\widehat{f}_{\ell,m}|^2 \frac{a_\ell^{(s)}}{a_\ell^{(s)}}\\
&\le a_L^{(s)}\|f\|_{H^s}^2.
\end{align*}

Using now the needlet definition \eqref{eq:need}, we can rewrite
$g_j^{{\rm need}}$ as
\begin{align*}
g_j^{{\rm need}}(\bsx, \bsy)&= \sum_{k=1}^{n_j} w_{j,k}
\Lambda_j(\bsx, \bsx_{j,k}) \Lambda_j(\bsy, \bsx_{j,k})\\
&= \int_{\mathbb{S}^d}\Lambda_j(\bsx, \bsz) \Lambda_j(\bsy, \bsz) \rd \sigma(\bsz),
\end{align*}
where in the second step we used the fact that $\Lambda(\bsx, \bsz)\Lambda(\bsy, \bsz)$ is a spherical polynomial in $\bsz$ of degree $2^{j+1}-2$, together with the needlet cubature property \eqref{eq:polyprec}.  In a similar way we have
\begin{equation*}
g_j^{{\rm gen}}(\bsx, \bsy)= \sum_{k=1}^{n_j} W_{j,k}
\Lambda_j(\bsx, \bsX_{j,k}) \Lambda_j(\bsy, \bsX_{j,k}),
\end{equation*}
which is just the generalised cubature approximation to the integral
above, allowing us to write
\begin{equation*}
g_j^{{\rm gen}}(\bsx, \bsy)-g_j^{{\rm need}}(\bsx, \bsy)= \left(\mathrm{Q}[\mathcal{X}_{N_j}] - \mathrm{I}\right)
\left(\Lambda_j(\bsx, \cdot) \Lambda_j(\bsy, \cdot)\right),
\end{equation*}
where $\mathcal{X}_{N_j}$ denotes the point and weight set $\{X_{j,k},W_{j,k}\}$ of the $j$th generalised cubature rule.
It therefore follows from the assumed property \eqref{eq:QMCpropj}
of the sequence of those rules that
\begin{equation*}
|g_j^{{\rm gen}}(\bsx, \bsy)-g_j^{{\rm need}}(\bsx, \bsy)|
\le \frac{\Cqmc}{N_j^{s/d}}\left\|\Lambda_j(\bsx, \cdot) \Lambda_j(\bsy, \cdot)\right\|_{\mathbb{H}^s},
\end{equation*}
and hence
\begin{equation*}
\|g_j^{{\rm gen}}(\bsx, \cdot)-g_j^{{\rm need}}(\bsx, \cdot)\|_{\mathbb{L}_2}
\le \frac{\Cqmc}{N_j^{s/d}}\sqrt{\int_{\mathbb{S}^d}
\left\|\Lambda_j(\bsx, ) \Lambda_j(\bsy, \cdot)\right\|^2_{\mathbb{H}^s}\rd \sigma(\bsy)}.
\end{equation*}
It now follows from \eqref{eq:normfminus} that
\begin{equation}\label{eq:normfminusfunal}
\|V_J^{{\rm gen}}(f,\cdot) - V_J^{{\rm need}}(f,\cdot)\|_{\mathbb{L}_2} \le
\sum_{j=J_0+1}^J\frac{\Cqmc}{N_j^{s/d}}2^{-(J_0-1)s}B_j\|f \|_{\mathbb{H}^s},
\end{equation}
where $B_j$ is given by \eqref{eq:Bj}.  The fact that $B_j$ is independent
of $\bsx$ is a consequence of the rotational invariance of the measure
$\sigma$.

 The desired result now follows immediately from
\eqref{eq:normfminusfunal}, the known error \eqref{eq:need_error} of
the needlet approximation, and the triangle inequality. The proof of the estimate \eqref{eq:B.j.estimate} will be given in Subsection~\ref{sec:error-analysis}.
\end{proof}

\subsection{Estimating $B_j$}\label{sec:error-analysis}
In order to complete the proof of Theorem~\ref{thm:main} we need to estimate the
coefficients $B_j$ given by \eqref{eq:Bj}.

Direct computation utilising the addition theorem, the orthonormality
relations for spherical harmonics, and the Funk-Hecke formula shows that
$B_j$ is given by the following Legendre polynomial expression
\begin{equation}
\label{eq:A.j.Form.1}
B_j^2
= \frac{\omega_{d-1}}{\omega_d} \int_{-1}^1 \left( \sum_{\ell=0}^\infty \ffilter{\frac{\ell}{2^{j-1}}}^2 Z(d, \ell) \, P_\ell( t ) \right)^2 \\
S_M(t) \left( 1 - t^2 \right)^{d/2-1} \rd t,
\end{equation}
where $M:= 2^j$ and
\begin{equation} \label{eq:S.M.t}
S_M( t ) := \sum_{n=0}^{2(M-1)} \frac{1}{a_n^{(s)}} \, Z(d,n) \, P_n( t ).
\end{equation}
For proving \eqref{eq:A.j.Form.1} we first note that $\| \Lambda_{\bsx} \Lambda_{\bsy} \|_{\mathbb{H}^s}^2$ depends only on $\bsx \cdot \bsy$ by rotational invariance. Thus we can write $B_j^2$ as the double integral
\begin{equation*}
B_j^2 = \int_{\mathbb{S}^d} \int_{\mathbb{S}^d} \| \Lambda_{\bsx} \Lambda_{\bsy} \|_{\mathbb{H}^s}^2  \rd \sigma( \bsx ) \rd \sigma( \bsy ), 
\end{equation*}
where, using $h_\ell = \ffilter{\frac{\ell}{2^{j-1}}}$, 
\begin{equation*}
\Lambda_{\bsx}( \bsz ) := \Lambda_j( \bsx, \bsz ) =
\sum_{\ell=0}^\infty\sum_{m=1}^{Z(d,\ell)} h_\ell\, Y_{\ell,m}(\bsx) Y_{\ell,m}(\bsz)=:
\sum_{\ell,m} h_\ell \, Y_{\ell,m}(\bsx) Y_{\ell,m}(\bsz).
\end{equation*}
Now
\begin{equation*}
\| \Lambda_{\bsx} \Lambda_{\bsy} \|_{\mathbb{H}^s}^2 = \sum_{n,k} \frac{\left| \widehat{(\Lambda_{\bsx} \Lambda_{\bsy})}_{n,k} \right|^2}{a_n^{(s)}} = \sum_{n,k} \frac{\left| ( \Lambda_{\bsx} \Lambda_{\bsy}, Y_{n,k} ) \right|^2}{a_n^{(s)}},
\end{equation*}
so
\begin{equation*}
B_j^2 = \sum_{n,k} \frac{1}{a_n^{(s)}} \int_{\mathbb{S}^d} \int_{\mathbb{S}^d} \left| ( \Lambda_{\bsx} \Lambda_{\bsy}, Y_{n,k} ) \right|^2 \rd \sigma( \bsx ) \rd \sigma( \bsy ).
\end{equation*}
Writing the square of the inner product as a double integral, interchanging the two double integrals and using orthogonality relations and the addition theorem, we get
\begin{align*}
&\sum_{k}\int_{\mathbb{S}^d} \int_{\mathbb{S}^d} \left| ( \Lambda_{\bsx} \Lambda_{\bsy}, Y_{n,k} ) \right|^2 \rd \sigma( \bsx ) \rd \sigma( \bsy ) \\
&\phantom{equals}= 
\int_{\mathbb{S}^d} \int_{\mathbb{S}^d} \left( \sum_{\ell} h_{\ell}^2 \, Z(d,\ell) P_\ell( \bsz^\prime \cdot \bsz^{\prime\prime} ) \right)^2 Z(d,n) P_n( \bsz^\prime \cdot \bsz^{\prime\prime} ) \rd \sigma( \bsz^{\prime} ) \rd \sigma( \bsz^{\prime\prime} ) \\
&\phantom{equals}= 
\frac{\omega_{d-1}}{\omega_d} \int_{-1}^1 \left( \sum_{\ell} h_{\ell}^2 \, Z(d,\ell) P_\ell( t ) \right)^2 Z(d,n) P_n( t ) \left( 1 - t^2 \right)^{d/2-1} \rd t.
\end{align*}
The last step utilises rotational invariance and the Funk-Hecke formula. Because of \eqref{eq:filter.prop.2}, the integral vanishes for $n > 2( 2^j - 1 )$ due to orthogonality relations for Legendre polynomials and we arrive at \eqref{eq:A.j.Form.1} and \eqref{eq:S.M.t}.

%

The filtered projection kernel $\Lambda_j$ is localised:
\begin{equation*}
  |\Lambda_j(\bsx,\bsy)|\leq\frac{c2^{jd}}{(1+2^j\dist(\bsx,\bsy))^\kappa}, \qquad \bsx,\bsy \in \mathbb{S}^d,
\end{equation*}
since this is the same estimate as \eqref{eq:psi-localised} (see
\cite{WaSl2017,WaLGSlWo2017,NaPeWa2006-2}), given that $\psi_{j,k}$ is
proportional to $\Lambda_j$. From this and \eqref{eq:filtered.kernel} it
follows that
\begin{equation*}
\left|\sum_{\ell=0}^{\infty} h\left(\frac{\ell}{2^{j+1}}\right) \, Z(d,n) \, P_n( t )\right|
\le\frac{c2^{jd}}{(1+2^j\sqrt{2(1-t)})^\kappa},
\qquad t = \bsx \cdot \bsy, \  \bsx,\bsy \in \mathbb{S}^d,
\end{equation*}
where we used $\mathrm{dist}( \bsx, \bsz ) \geq \| \bsx - \bsz \| =
\sqrt{2( 1 - \bsx \cdot \bsz )}$ for $\bsx, \bsz \in \mathbb{S}^d$.

 Using the last estimate and the estimate
\begin{equation*}
  |S_M(t)|\leq S_M(1)=\sum_{n=0}^{2(M-1)} \frac{Z(d,n)}{a_n^{(s)}}\asymp M^{2s+d},
\end{equation*}
 we arrive at
\begin{equation*}
B_j^2 \leq S_M( 1 ) \frac{\omega_{d-1}}{\omega_d} \int_{-1}^1 \frac{c^2 M^{2d}}{\left( 1 + M \sqrt{2(1-t)} \right)^{2\kappa}} \left( 1 - t^2 \right)^{d/2-1} \rd t.
\end{equation*}

The change of variable $2 (x/M)^2 = 1 - t$, $x \geq 0$, and $(4 x / M^2) \rd x = - \rd t$ gives
\begin{equation}\label{eq:A_j-est}
  \begin{split}
    B_j^2 &\leq
    c^\prime  \, S_M( 1 ) \, M^d \, \int_0^M \frac{x^{d-1}}{\left( 1 + 2 x \right)^{2\kappa}} \left( 1 - \left( \frac{x}{M} \right)^2 \right)^{d/2-1} \rd x \\
    &\leq
    c^\prime \, S_M( 1 ) \, M^d \, \int_0^\infty \frac{x^{d-1}}{\left( 1 + 2 x \right)^{2\kappa}} \, \rd x \leq (c^{\prime\prime})^2 \, 2^{2j(s+d)}, \qquad J_0 + 1 \leq j \leq J.
  \end{split}
\end{equation}
Consequently,
\begin{equation}
\sum_{j=J_0+1}^J \frac{B_j}{N_j^{s/d}} \leq c^{\prime\prime} \sum_{j=J_0+1}^J \frac{( 2^j )^{s+d}}{N_j^{s/d}}.
\end{equation}

\subsection{Exact computations}\label{sec:exact-computations}
Here we provide an exact formula for
$B_j$. This also shows that the estimates given in the last section exhibit the
precise order of magnitude.

Again we set $M = 2^j$.
Expansion of the square in \eqref{eq:A.j.Form.1} gives
\begin{equation*}
B_j^2 = \sum_{n=0}^{2(M-1)} \sum_{\ell=M/4}^{M-1} \sum_{\ell^\prime=M/4}^{M-1} \frac{1}{a_n^{(s)}} \ffilter{\frac{\ell}{2^{j-1}}}^2 \ffilter{\frac{\ell^\prime}{2^{j-1}}}^2 b_{\ell,\ell^\prime,n},
\end{equation*}
where($\lambda=(d-1)/2$)
\begin{align*}
b_{\ell,\ell^\prime,n}
&:= \frac{\omega_{d-1}}{\omega_d} \int_{-1}^1 Z(d, \ell) P_\ell( t ) \, Z(d, \ell^\prime) P_{\ell^\prime}( t ) \, Z(d,n) P_n( t ) \left( 1 - t^2 \right)^{d/2-1} \rd t \\
&= \frac{\ell+\lambda}{\lambda} \, \frac{\ell^\prime+\lambda}{\lambda} \, \frac{n+\lambda}{\lambda} \, \frac{\omega_{d-1}}{\omega_d} \int_{-1}^1 C_{\ell}^\lambda( t ) \, C_{\ell^\prime}^\lambda( t ) \, C_{n}^\lambda( t ) \left( 1 - t^2 \right)^{\lambda-1/2} \rd t.
\end{align*}
By \cite[Corollary 6.8.4, p. 321]{AnAsRo1999},
\begin{equation*}
\begin{split}
&\int_{-1}^1 C_{\ell}^\lambda( t ) \, C_{\ell^\prime}^\lambda( t ) \, C_{n}^\lambda( t ) \left( 1 - t^2 \right)^{\lambda-1/2} \rd t \\
&\phantom{equals}=
\frac{2^{1-2\lambda} \pi}{( \Gamma( \lambda ) )^4} 
\frac{\Gamma(r + 2 \lambda )}{\Gamma( r + \lambda +1)}
\frac{\Gamma(r-n+\lambda)}{\Gamma(r-n+1)}
\frac{\Gamma(r-\ell+\lambda)}{\Gamma(r-\ell+1)} \frac{\Gamma(r-\ell^\prime+\lambda)}{\Gamma(r-\ell^\prime+1)},
\end{split}
\end{equation*}
where $\ell + \ell^\prime + n = 2r$ is even and the sum of any two of $\ell$, $\ell^\prime$, $n$ is not less than the third. The integral vanishes in all other cases.
Hence, we get as sum of positive terms
\begin{equation}\label{eq:A_j-final}
\begin{split}
B_j^2
&=
\frac{\lambda}{\Gamma(2\lambda) ( \Gamma(\lambda) )^2} \mathop{\sum_{n=0}^{2(2^j-1)} \sum_{\ell=2^{j-2}+1}^{2^j-1} \sum_{\ell^\prime=2^{j-2}+1}^{2^j-1}}_{\substack{n + \ell + \ell^\prime = 2r \ \text{even}, \\ \ell + \ell^\prime \geq n, \, n + \ell \geq \ell^\prime, \, \ell^\prime + n \geq \ell}} \frac{\Gamma(r+2\lambda)}{\Gamma(r+\lambda+1)} \, \frac{1}{a_n^{(s)}}  \frac{n + \lambda}{\lambda} \, \frac{\Gamma(r-n+\lambda)}{\Gamma(r-n+1)} \\
&\phantom{equalsequalseq}\times \ffilter{\frac{\ell}{2^{j-1}}}^2  \, \frac{\ell + \lambda}{\lambda} \, \frac{\Gamma(r-\ell+\lambda)}{\Gamma(r-\ell+1)} \, \ffilter{\frac{\ell^\prime}{2^{j-1}}}^2 \, \frac{\ell^\prime + \lambda}{\lambda} \, \frac{\Gamma(r-\ell^\prime+\lambda)}{\Gamma(r-\ell^\prime+1)}.
\end{split}
\end{equation}
A crude estimation
of the exact formula \eqref{eq:A_j-final} yields a lower bound of order $2^{2j(s+d)}$, which is the same order as obtained for the upper bound in \eqref{eq:A_j-est}.
%
More precisely, we fix 
$3/4 < q < 1$ such that 
\begin{equation*}
 \min_{q \leq x \leq 2-q} h(x)^2\geq\frac12.
\end{equation*}
Then $\ffilter{\ell / 2^{j-1}}^2 \geq \frac12$ 
for $q2^{j-1}  \leq \ell \leq (2-q)2^{j-1} $. 
The bound $q > 3/4$ ensures that the summation indices $n, \ell, \ell^\prime$ satisfy the inequalities $\ell + \ell^\prime \geq n$, $ n + \ell \geq \ell^\prime$, and $\ell^\prime + n \geq \ell$, for all $(n,\ell,\ell^\prime)$ in the cube $\Cube=[q2^{j-1},(2-q)2^{j-1}]^3 \cap \mathbb{Z}^3$. Furthermore, the ratios of gamma functions can be estimated using the asymptotic relations
\begin{equation*}
\frac{\Gamma( x + a )}{\Gamma( x + b )} \sim x^{a-b} \qquad \text{as $x \to \infty$.}
\end{equation*}
Taking into account \eqref{eq:sequenceassumption}, we get 
\begin{align*}
B_j^2 
&\geq 
c^\prime
\sum_{\substack{(n, \ell, \ell^\prime)\in \Cube\\n + \ell + \ell^\prime = 2r \ \text{even}}} r^{\lambda-1} n^{2s+1} \left( r - n \right)^{\lambda-1} \ell \left( r - \ell \right)^{\lambda-1} \ell^\prime \left( r - \ell^\prime \right)^{\lambda-1} \\
&\geq 
c^{\prime\prime} \left( 2^j \right)^{2s+2d-3} \sum_{\substack{(n, \ell, \ell^\prime)\in \Cube \\
n + \ell + \ell^\prime \ \text{even}}} 1.
\end{align*}
Here we have used the fact that by the choice of the summation range $\Cube$ all terms $r,n,r-n,\ell,r-\ell,\ell^\prime,r-\ell^\prime$ are of order $2^j$.
The right-most triple sum counts the number of integer triples in a box with side length $(1-q)2^{j-1}$ having even $\|\cdot\|_1$-distance from the origin. This number is of order $(2^j)^3$ which yields the desired result for $B_j^2$. 

\section{Numerical experiments}

For the following experiments, the squared $\mathbb{L}_2$ error in approximating a function $f$
\[
  \| V_J^{\mathrm{gen}}(f;\cdot) - f \|_{\mathbb{L}_2(\mathbb{S}^2)}^2 =
  \int_{\mathbb{}S^2} \left|f(\bsx) - V_J^{\mathrm{gen}}(\bsx)\right|^2 \mathrm{d} \sigma(\bsx)
\]
is estimated using $10^6$ equal area points~\cite{RaSaZh1994} on $\mathbb{S}^2$ with equal weights.

We illustrate the behaviour of the approximation using two test functions on $\mathbb{S}^2$, one with unlimited smoothness and the second with a parameter that can be used to control the Sobolev space smoothness.
The first is the Franke function (see Renka~\cite[p. 146]{Renka1988}),
\begin{eqnarray*}
f(x, y, z) & = & 0.75 \exp(-(9x - 2)^2/4 - (9y - 2)^2/4 - (9z - 2)^2/4) \\
           & &  + 0.75 \exp(-(9x + 1)^2/49 - (9y + 1)/10 - (9z + 1)/10) \\
           & &  + 0.5 \exp(-(9x - 7)^2/4 - (9y - 3)^2/4 - (9z - 5)^2/4) \\
           & &  - 0.2 \exp(-(9x - 4)^2 - (9y - 7)^2 -(9z - 5)^2), \qquad (x, y, z) \in \mathbb{S}^2,
\end{eqnarray*}
which is in $C^\infty(\mathbb{S}^2)$,
and hence all Sobolev spaces $\mathbb{H}^s(\mathbb{S}^d)$, $s \geq d/2$, but the Gaussian with negative coefficient and 
centre at $(4/9, 7/9, 5/9) \in \mathbb{R}^3$, close to the unit sphere,
is hard to approximate.

The second family of test functions are the sums of $6$ compactly supported Wendland radial
basis functions~\cite{WaLGSlWo2017}
\begin{equation}
f_k(\bsx) = \sum_{j=1}^6 \phi_k(\bsz_j - \bsx), \quad k \geq 0,
\end{equation}
where $\bsz_1:=(1, 0, 0)$, $\bsz_2:=(-1, 0, 0$), $\bsz_3:=(0, 1, 0)$, $\bsz_4:=(0, -1, 0)$, $\bsz_5:=(0, 0, 1)$, $\bsz_6:=(0, 0, -1)$.
 The original Wendland functions~\cite{Wendland1995} are
 \begin{equation}
\widetilde{\phi}_k(r) :=
\left\{
\begin{array}{ll}
(1 - r)_+^2,  &  k = 0, \\[1ex]
(1 - r)_+^4 (4r + 1), & k= 1, \\[0.5ex]
(1 - r)_+^6 (35r^2 + 18r + 3)/3, & k= 2, \\[0.5ex]
(1 - r)_+^8 (32r^3 + 25r^2 + 8r + 1), & k= 3, \\[0.5ex]
(1 - r)_+^{10} (429r^4 + 450r^3 + 210r^2 + 50r + 5)/5, & k= 4,
\end{array}\right.
\end{equation}
where $(r)_+:= \max\{r, 0\}$ for $r\in\mathbb{R}$.
The normalized (equal area) Wendland functions as defined in [5] are
\[
 \phi(r) := \widetilde{\phi}_k\left(\frac{r}{\delta_k}\right),
\qquad
\delta_k := \frac{3(k + 1)\Gamma(k + \tfrac{1}{2})}{2 \Gamma(k + 1)}, \quad k \geq 0.
\]
Scaled in this way, the Wendland functions converge pointwise to a Gaussian as $k\to\infty$,
see Chernih et al. \cite{ChSlWo2014}.
Thus as $k$ increases the main change is to the smoothness of $f$,
with~\cite{NaWa2002, LeGeSlWe2010} $f_k \in  \mathbb{H}^{k + 3/2}(\mathbb{S}^2)$.
Other possible test functions with known Sobolev space smoothness can be found in \cite{Brauchart2018}.

For levels $j = 0, \ldots, J_0$, the number of needlets $n_j$ at level $j$ is determined by the requirement \eqref{eq:polyprec} that the needlet quadrature rule has degree of precision $L_j = 2^{j+1}-1$.
A (generally not achievable) lower bound,
see, for example \cite{Cools1997} or \cite{HeSlWo2015}, on the number of points to satisfy \eqref{eq:polyprec} is
\[
  n_j \geq \mathrm{dim}\left( \mathbb{P}_{\floor{\tfrac{L_j}{2}}} (\mathbb{S}^d) \right) =  Z\left(d+1,\floor{\tfrac{L_j}{2}}\right) \sim \frac{2}{\Gamma(d+1)} \left(\floor{\tfrac{L_j}{2}}\right)^d.
\]
For $d = 2$, needlet cubature rules with degree of precision $L_j$ and $n_j \approx \frac{L_j^2}{2}$ (roughly twice the lower bound) can be achieved by positive weight Gauss-Legendre rules (which have poor separation close to the poles), see for example~\cite{HeSlWo2015}, or equal weight spherical designs~\cite{Womersley2018}.
In this case $n_j \approx 2^{2j+1}$.

Figure~\ref{F:FrankeFull} illustrates the rapid decay of the $\mathbb{L}_2$ error for a full needlet approximation (with $J = J_0 = 7$) using symmetric spherical designs (SS), see \cite{Womersley2018},
of the Franke function.

For levels $j = J_0 + 1, \ldots,J$ there is additional flexibility in the choice of the generalised needlet cubature rule.
\begin{figure}[ht]
\centering
\includegraphics[width=0.8\textwidth, trim = 0.72cm 0.0cm 0.72cm 0.2cm, clip]{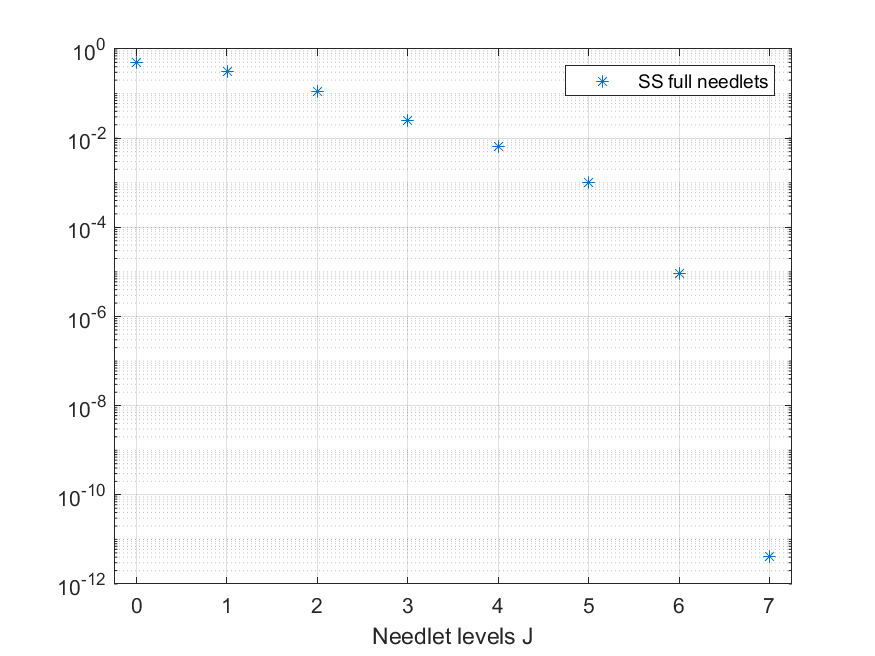}
\caption{$\mathbb{L}_2$ errors approximating the Franke function using a full needlet approximation with symmetric spherical designs.}
\label{F:FrankeFull}
\end{figure}

\begin{figure}
\begin{subfigure}[b]{0.45\textwidth}
  \centering 
  \includegraphics[width=0.95\textwidth, trim = 1.7cm 6.5cm 1.7cm 6.5cm, clip]{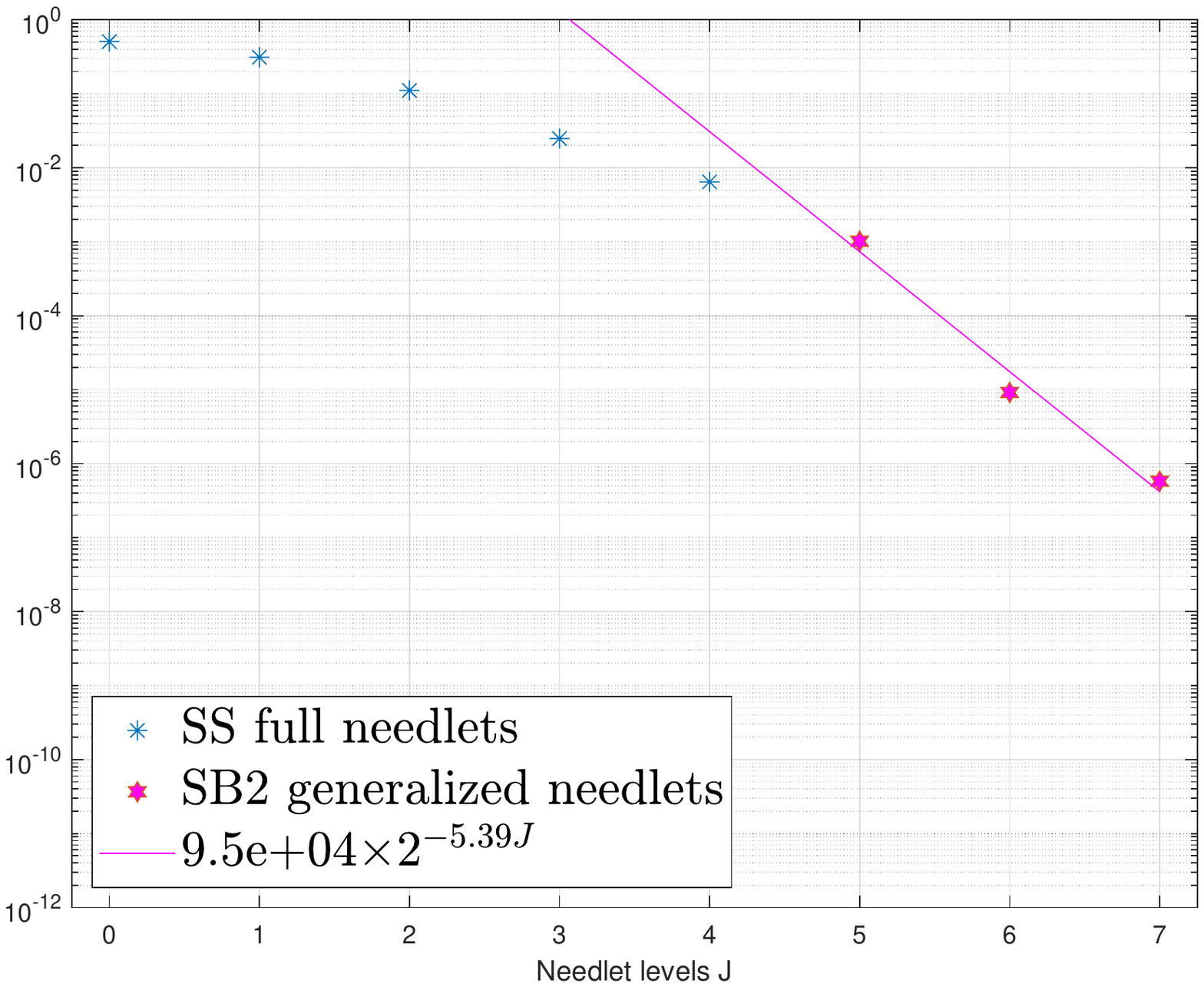}
\caption[]{\label{F:Franke1}generalised  needlets, $N_j = 2^{2(j+1)}$}
\end{subfigure}
\begin{subfigure}[b]{0.45\textwidth}
  \centering  
  \includegraphics[width=0.95\textwidth, trim = 1.7cm 6.5cm 1.7cm 6.5cm, clip]{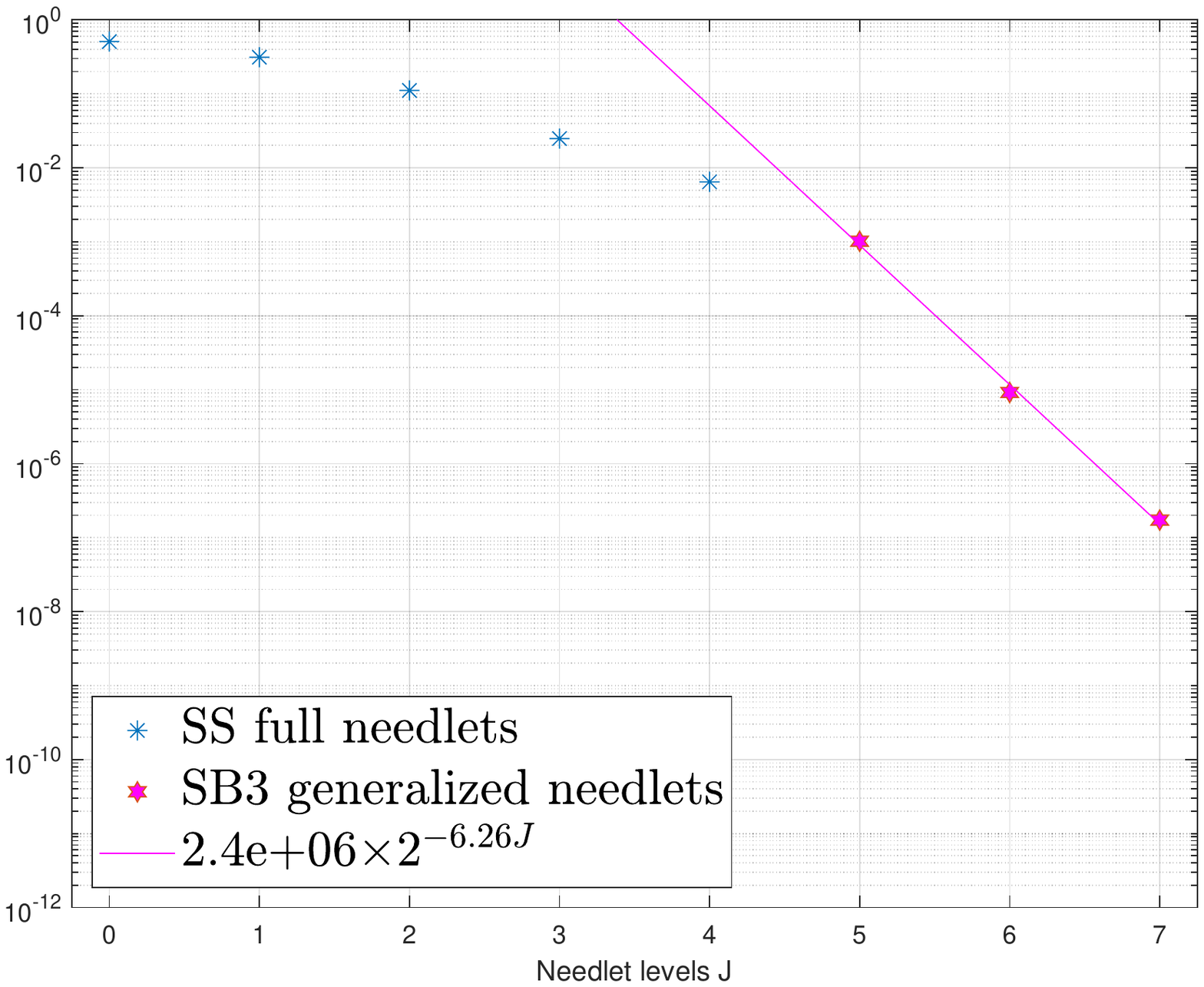}
  \caption[]{\label{F:Franke2}generalised  needlets, $N_j = 2 \times 2^{2(j+1)}$}
\end{subfigure}
\vskip\baselineskip
\begin{subfigure}[b]{0.45\textwidth}
  \centering  
  \includegraphics[width=0.95\textwidth, trim = 1.7cm 6.5cm 1.7cm 6.5cm, clip]{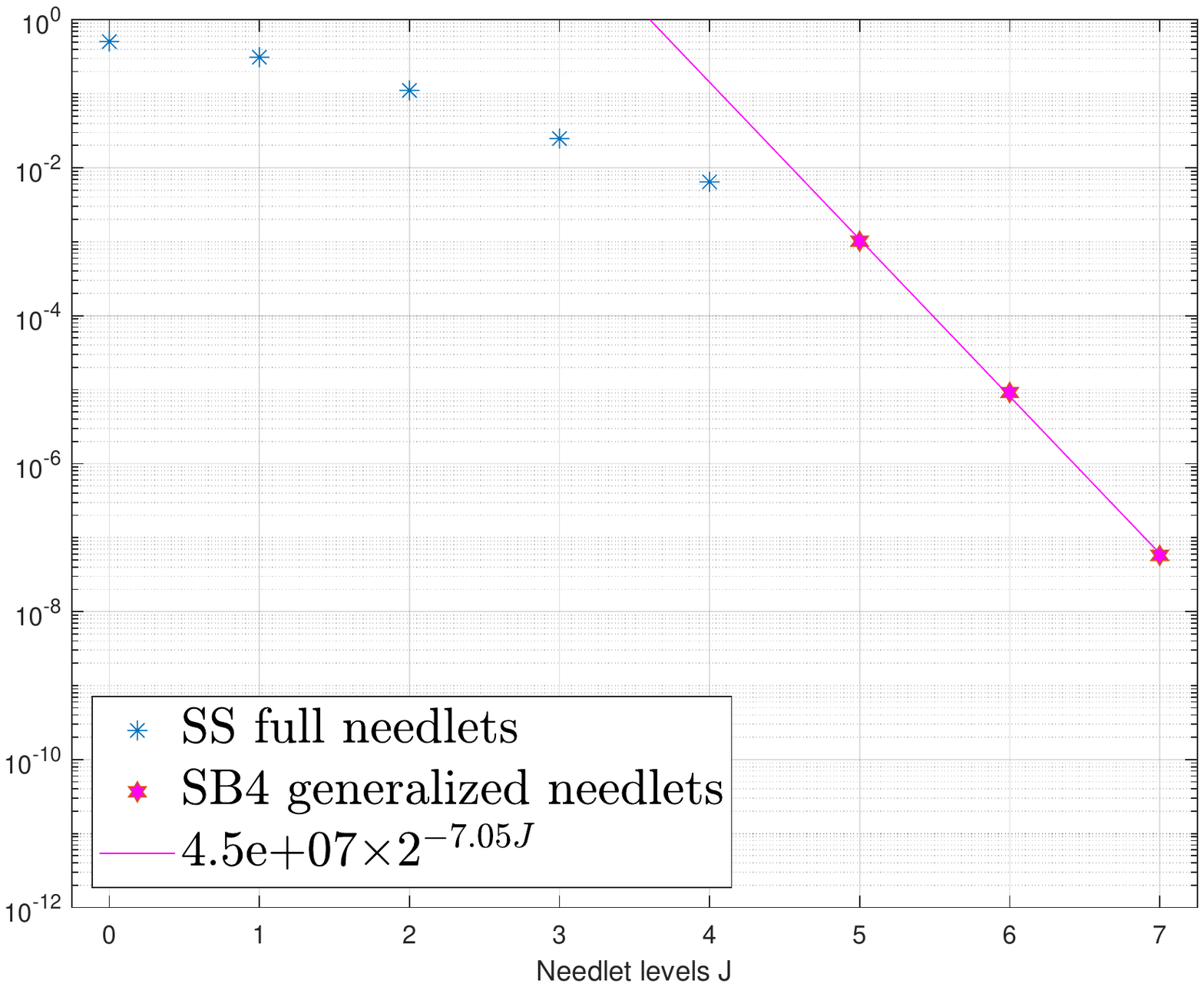}
  \caption[]{\label{F:Franke3}generalised  needlets, $N_j = 4 \times 2^{2(j+1)}$}
\end{subfigure}
\begin{subfigure}[b]{0.45\textwidth}
  \centering
  \includegraphics[width=0.95\textwidth, trim = 1.7cm 6.5cm 1.7cm 6.5cm, clip]{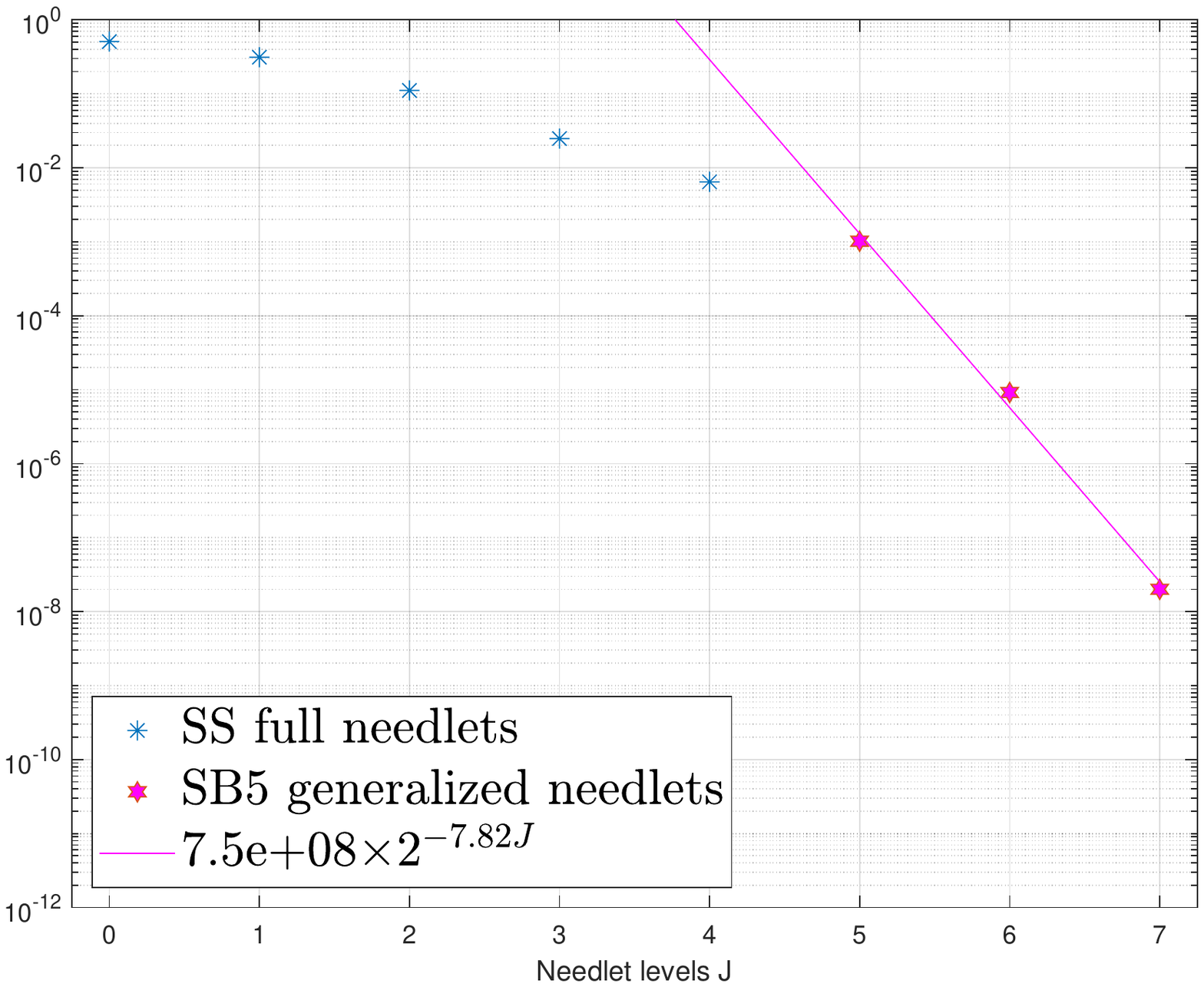}
  \caption[]{\label{F:Franke4}generalised  needlets, $N_j = 8 \times 2^{2(j+1)}$
  }
\end{subfigure}
\caption{$L_2$ errors for approximating the Franke function with a full needlet using symmetric spherical designs for levels $0,\ldots,4$ and generalised needlets for levels $5, 6, 7$ using generalised spiral points with $N_j = c \times 2^{2(j+1)}$ and $c = 1, 2, 4 ,8$.}
\label{F:Franke}
\end{figure}
Figure~\ref{F:Franke} illustrates the $\mathbb{L}_2$ errors for approximating the Franke function with a generalised needlet with $J_0 = 4$ and $J = 7$. Again symmetric spherical designs are used for the full needlets up to level $J_0$. The Bauer~\cite{Bauer2000} version of the generalised spiral points \cite{RaSaZh1994} were used for the generalised needlets in levels $j = 7, 8, 9$.
The four subplots use
$N_j = c 2^{2(j+1)}$ for $c = 1, 2, 4, 8$,
illustrating improving convergence for
the generalised needlet levels $j = 5, 6, 7$ as $N_j$ increases.

\begin{figure}
    \begin{subfigure}[b]{0.45\textwidth}
    \centering 
    \includegraphics[width=0.95\textwidth,trim = 1.7cm 6.5cm 1.7cm 6.5cm, clip]{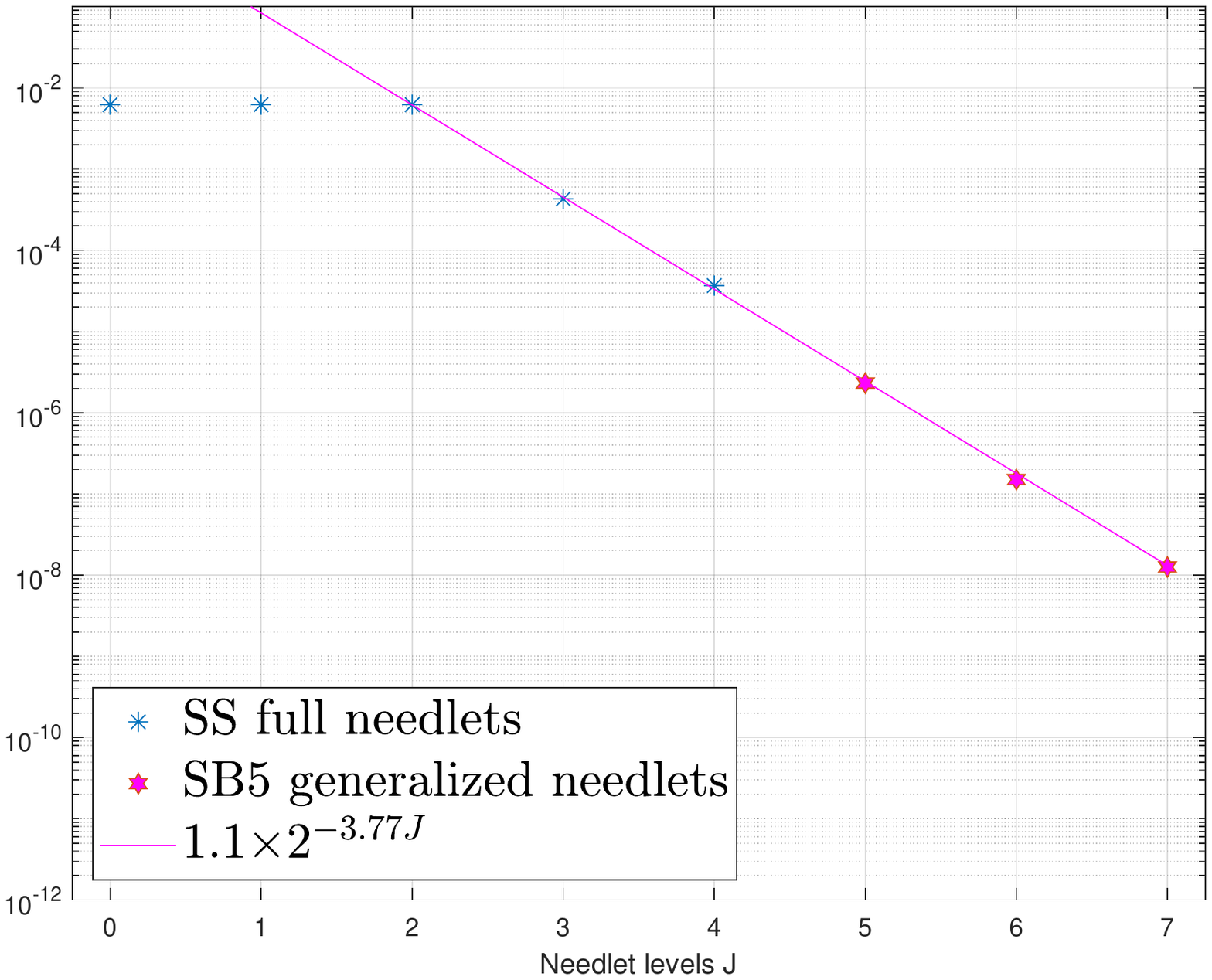}
    \caption[]{Wendland test function $f_1$}
    \label{F:Wendland1}
    \end{subfigure}
    \begin{subfigure}[b]{0.45\textwidth}
    \centering         
    \includegraphics[width=0.95\textwidth,trim = 1.7cm 6.5cm 1.7cm 6.5cm, clip]{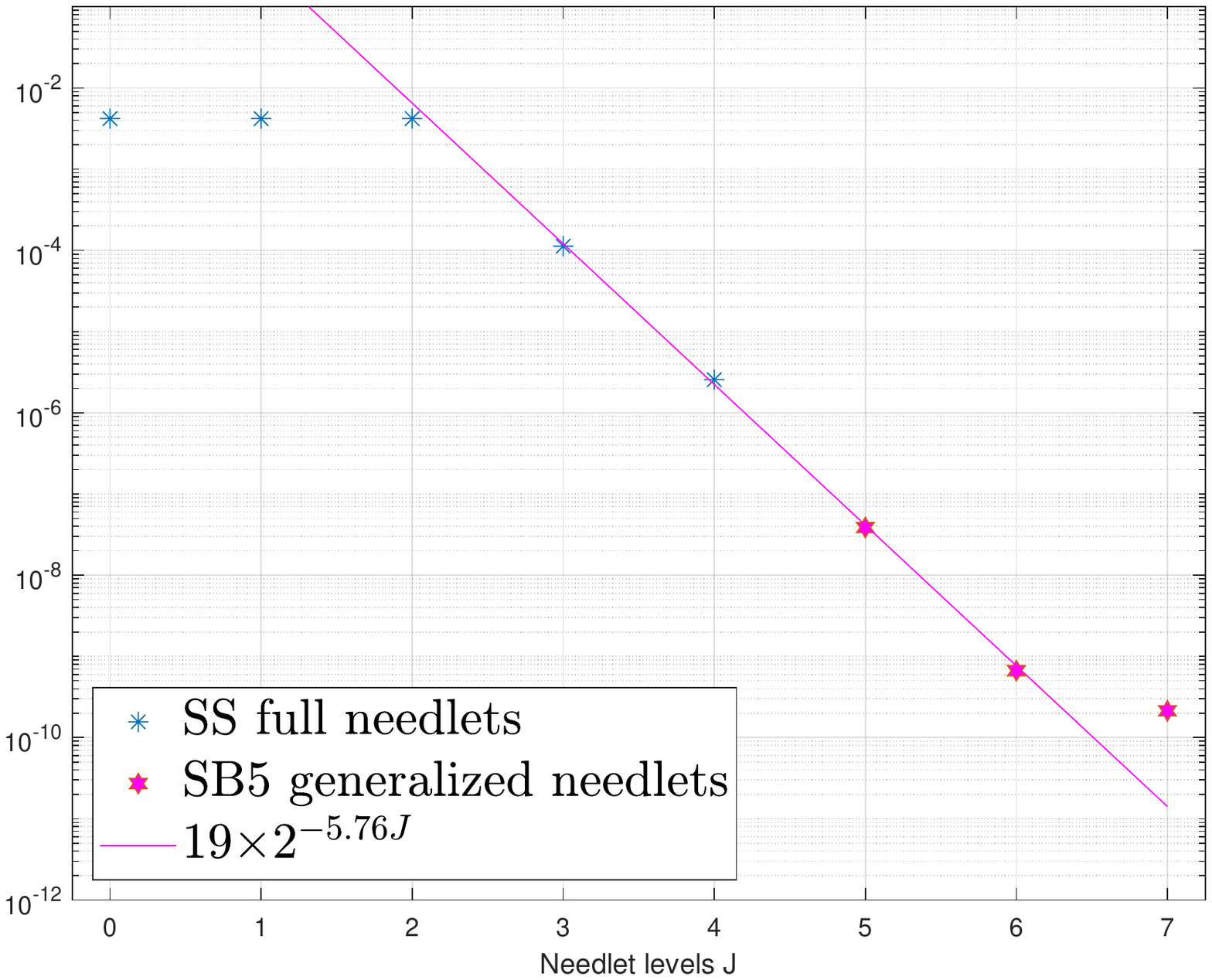}
    \caption[]{Wendland test function $f_2$}
    \label{F:Wendland2}
    \end{subfigure}

    \vskip\baselineskip
    \begin{subfigure}[b]{0.45\textwidth}
    \centering         
    \includegraphics[width=0.95\textwidth,trim = 1.7cm 6.5cm 1.7cm 6.5cm, clip]{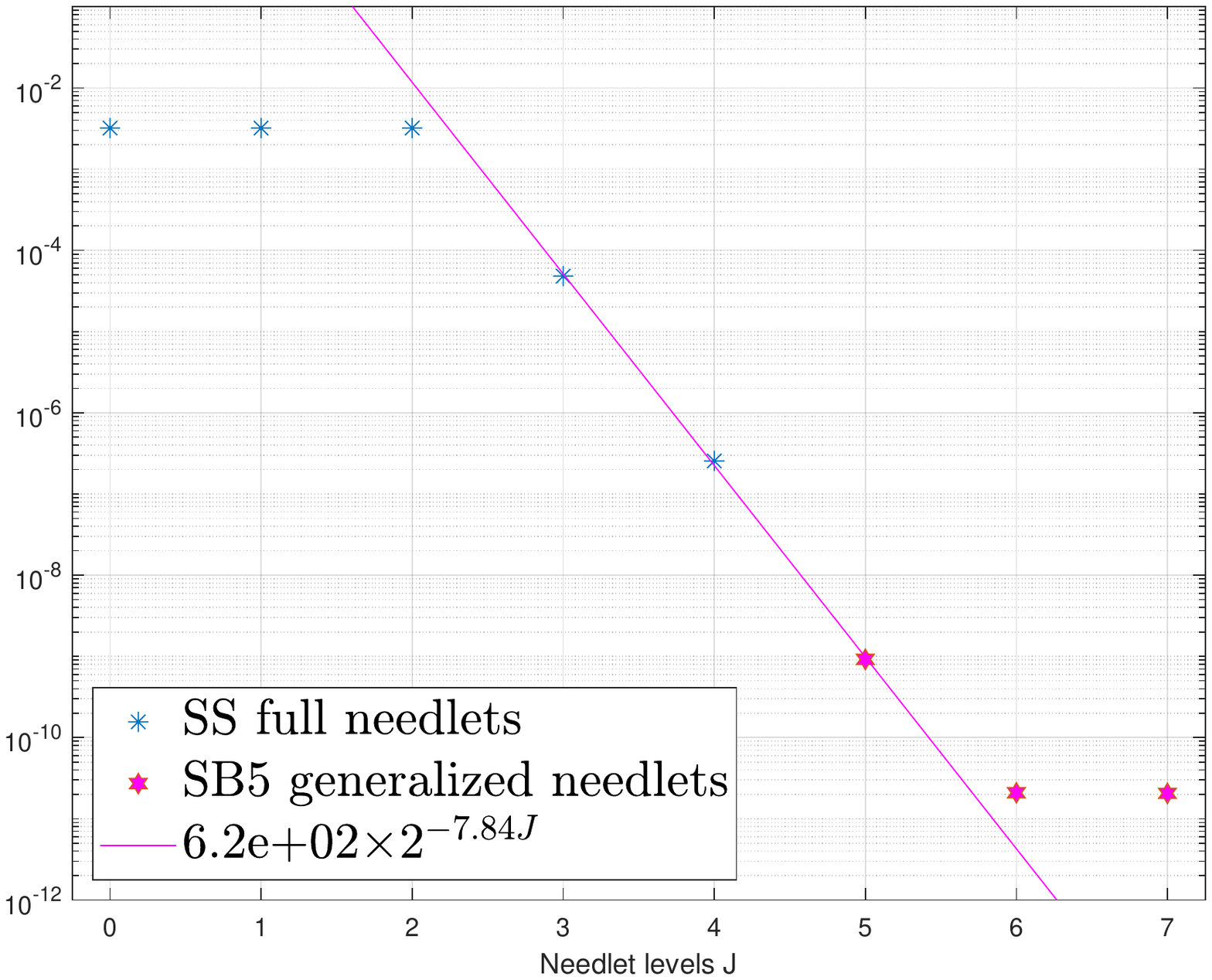}
    \caption[]{Wendland test function $f_3$}
    \label{F:Wendland3}
    \end{subfigure}
    \begin{subfigure}[b]{0.45\textwidth}
    \centering
    \includegraphics[width=0.95\textwidth,trim = 1.7cm 6.5cm 1.7cm 6.5cm, clip]{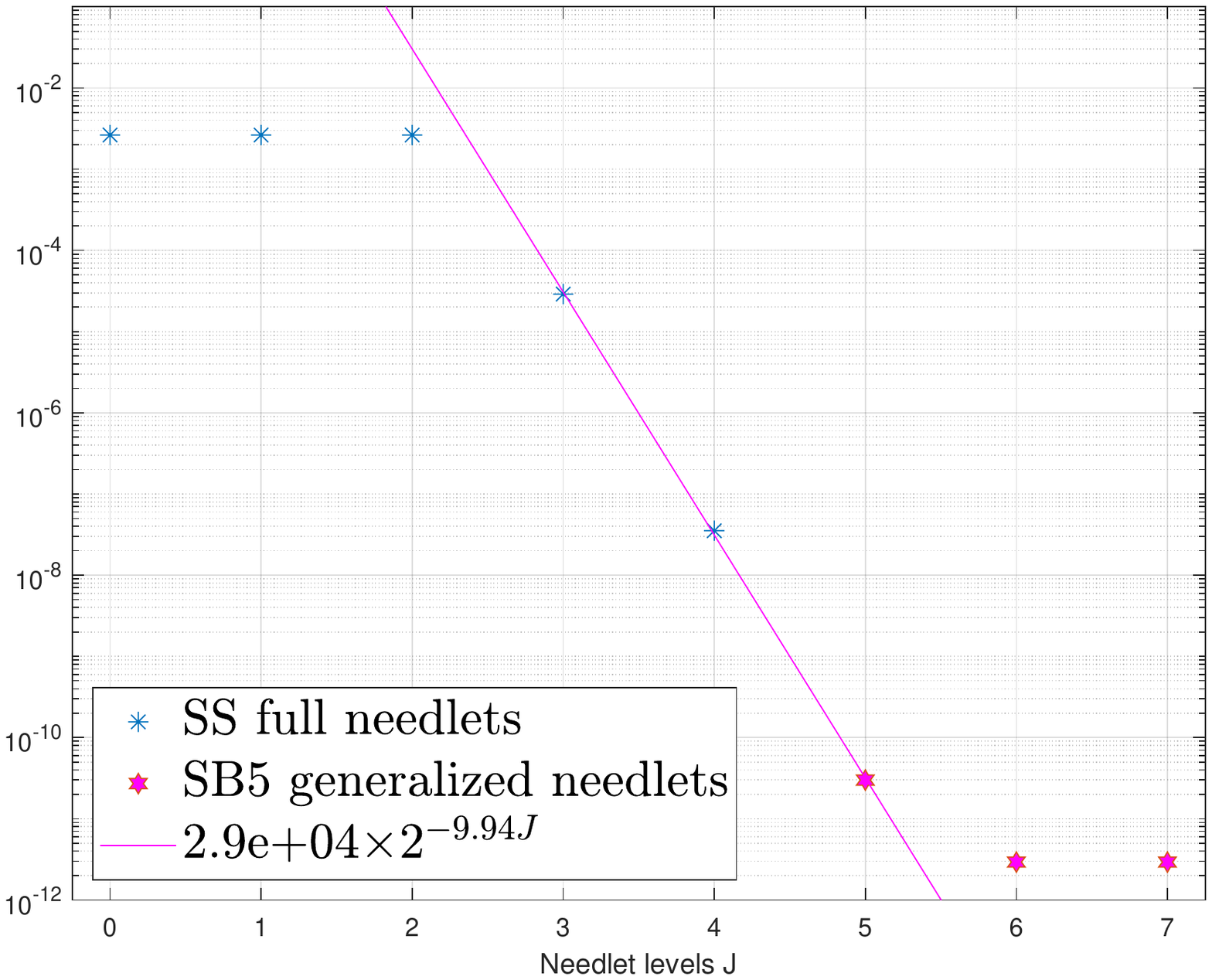}
    \caption[]{Wendland test function $f_4$}
    \label{F:Wendland4}
    \end{subfigure}
\caption{$L_2$ errors when approximating the Wendland test functions $f_k$, $k = 1, 2, 3, 4$ using full needlets with symmetric spherical designs for levels $0,\ldots,4$ and generalised needlets using spiral points with $N_j = 8 \times 2^{2(j+1)}$ for levels $5, 6, 7$.}
\label{F:WendlandSB}
\end{figure}        
        
Figure~\ref{F:WendlandSB} illustrates the importance of the smoothness of the function to be approximated using
the sum-of-Wendland functions $f_k$ for $k = 1, 2, 3, 4$
in subfigures~\ref{F:Wendland1} to \ref{F:Wendland4}. 
The full needlets for levels $j = 0, \ldots, 4$
again used the symmetric spherical designs,
while the generalised needlets used the Bauer generalised spiral points with $N_j = 8 \times 2^{2(j+1)}$.
The estimated order of convergence increases from
$2^{-3.7 J}$ for $f_1$ to  $2^{-9.9 J}$ for $f_4$.
For the smoother functions $f_3$ and $f_4$, accumulation of rounding errors limited the achievable accuracy for levels $6$ and $7$.

\subsection{Discussion}
 The error bound \eqref{eq:approx.error.bound.0}
suggests that at least $N_j > B_j^\frac{d}{s}$.
Using the estimate $B_j \approx 2^{j(s+d)}$,
this gives
$N_j > 2^{j (d + d^2/s)} = 2^{j (2+4/s)}$ for $d = 2$.
Table~\ref{T:Bj} gives numerically calculated values of $B_j$ using \eqref{eq:A.j.Form.1} for $d = 2$, $s = 2, 3$ and levels $j = 5, 6, 7$, along with the asymptotic estimate $2^{j(s+d)}$.
Keeping  $B_j / N_j^{s/d} < 1$ then requires
$N_j > B_j^{d/s}$  which gives, for $d = 2$, $N_j > B_j$ for $s = 2$ and $N_j > B_j^{2/3}$ for $s = 3$. Using the numerically calculated values for $B_j$ in Table~\ref{T:Bj} suggest values
of $N_j$ considerably larger than those
used in Figure~\ref{F:Franke},
where subplot~\ref{F:Franke4} has $N_5 = 32,768$,
$N_6 = 131,072$ and $N_7 = 524,288$.
On the other hand the symmetric spherical designs in Figure~\ref{F:FrankeFull} required just $n_5 = 2,018$, $n_6 = 8,130$ and $n_7 = 32,642$ needlets at levels $5, 6$ and $7$.

\begingroup
\renewcommand{\arraystretch}{1.3} 
\begin{table}
\centering
\begin{tabular}{c|c|lll}
$s$ & & $j = 5$ & $j = 6$ & $j = 7$ \\
\hline
$2$ & $B_j$ & $1.92 \times 10^5$ & $2.84 \times 10^6$ & $4.36 \times 10^7$ \\
$2$ & $2^{j(s+d)}$ & $1.05 \times 10^6$ & $1.68 \times 10^7$ & $2.68 \times 10^8$ \\
\hline
$3$ & $B_j$ & $5.48 \times 10^6$ & $1.59 \times 10^8$ & $4.84 \times 10^9$ \\ 
$3$ & $2^{j(s+d)}$ & $3.36 \times 10^7$ & $1.07 \times 10^9$ & $3.44 \times 10^{10}$ \\
\hline
\end{tabular}
\caption{\label{T:Bj}Numerically calculated values of $B_j$ using \eqref{eq:Bj} and the asymptotic estimate $2^{j(s+d)}$ for $d = 2$, $s = 2, 3$ and levels $j = 5, 6, 7$.}
\end{table}
\endgroup

A key step in a implementation of a (generalised) needlet approximation is
the evaluation of the continuous inner products \eqref{eq:inner}
to obtain the needlet coefficients
$(f, \psi_{j,k})$ or $(f, \Psi_{j,k})$.
A useful strategy, especially if the (generalised) needlet approximations is to be evaluated at many points $\bsx\in\mathbb{S}^d$, is to first evaluate
all the
$\sum_{j=0}^{J_0} n_j + \sum_{j=J_0+1}^J N_j$ needlet coefficients.

It is shown in \cite{WaLGSlWo2017} that approximating the level $j$ continuous inner products
by a quadrature rule that is exact for polynomials of degree $3\times 2^j - 1$ preserves the
error estimates for a full needlet approximation.
It is also possible to exploit the localization properties \eqref{eq:psi-localised} of needlets,
for example by integrating over a spherical cap around each needlet center~\cite{HeWo2012}.
To focus on flexibility gained from using generalised needlets, we 
have assumed in this section that all the needlet coefficient inner products are evaluated to high accuracy. In particular, a high order Gauss-Legendre (GL) rule was used for the results in Figures~\ref{F:FrankeFull} to \ref{F:WendlandSB}.

All the numerical results in this Section used a filter $h$ with smoothness $\kappa = 5$, see \eqref{eq:filter.prop.1}.
As the support of the filter $h$ is $[\frac{1}{2}, 2]$ and the continuity of $h$ implies $h(\frac{1}{2}) = 0 = h(2)$, evaluation of each generalised needlet $\Psi_{j,k}$ 
requires evaluating $P_\ell(\bsX_{j,k}\cdot \bsx)$
for $3\times 2^{j-2} - 1$ degrees $\ell = 2^{j-2}+1,\ldots,2^j-1$.
Assuming the inner products $(f, \Psi_{j,k})$ are known,
then evaluating the generalised needlet approximation
has a cost
\[
  \sum_{j=1}^J N_j \times \mbox{cost}(\Psi_{j,k})
  \approx 3 \sum_{j=1}^J N_j \sum_{\ell=2^{j-2}}^{2^j} \mbox{cost}(P_\ell).
\]
Thus increasing $N_j$ linearly increases the cost of evaluating the generalized needlet approximation.

\section{Conclusions}

In this paper we introduced the notion of generalised needlets on the
sphere, in which the cubature rules used in the construction are not
required to be exact for any polynomials, but
instead need to belong to a sequence of positive weight rules with
prescribed rate of decay for the $\mathbb{L}_2$ cubature error when applied to
functions in Sobolev spaces, thereby extending to unequal weights the
recently introduced concept of QMC designs. In practice a hybrid implementation is recommended, in which the lower levels use traditional needlets, while the generalised needlets are used only for some number of higher levels. The error bound given in
Section \ref{sec:hybrid} and the numerical experiments show that the
construction is feasible even for rules with equal weights and very large
point sets.
\bibliographystyle{abbrv}

\bibliography{LiberatedNeedlet}

\end{document}